\newtheorem{theorem}{Theorem}
\newtheorem{proposition}[theorem]{Proposition}
\newtheorem{corollary}[theorem]{Corollary}
\theoremstyle{definition}
\newtheorem{example}[theorem]{Example}
\definecolor{webgreen}{rgb}{0,.5,0}
\definecolor{webbrown}{rgb}{.6,0,0}
\newcommand{\seqnum}[1]{\href{http://www.research.att.com/cgi-bin/access.cgi/as/~njas/sequences/eisA.cgi?Anum=#1}{\underline{#1}}}
\begin{document}

\begin{center}
\vskip 1cm{\LARGE\bf On the Group of Almost-Riordan Arrays} \vskip 1cm \large
Paul Barry\\
School of Science\\
Waterford Institute of Technology\\
Ireland\\
\href{mailto:pbarry@wit.ie}{\tt pbarry@wit.ie}
\end{center}
\vskip .2 in

\begin{abstract} We study a super group of the group of Riordan arrays, where the elements of the group are given by a triple of power series. We show that certain subsets are subgroups, and we identify a normal subgroup whose cosets correspond to Riordan arrays. We give an example of an almost-Riordan array that has been studied in the context of Hankel and Hankel plus Toepliz matrices, and we show that suitably chosen almost-Riordan arrays can lead to transformations that have interesting Hankel transform properties. \end{abstract}

\section{Introduction}
The group of Riordan arrays $\mathcal{R}$ \cite{SGWW}  was first introduced by Shapiro, Getu, Woan, and Woodson in the early $1990$'s. Since then, they have been extensively studied and applied in a number of different fields.
At its simplest, a Riordan array is formally defined by a pair of power series, say $g(x)$ and $f(x)$, where $g(0)=1$ and $f(x)=x+a_2 x^2+a_3x^2+\ldots$, with integer coefficients (such Riordan arrays are called ``proper'' Riordan arrays). The pair $(g, f)$ is then associated to the lower-triangular invertible matrix whose $(n,k)$-th element $T_{n,k}$ is given by
$$T_{n,k}=[x^n] g(x)f(x)^k.$$
We sometimes write $(g(x), f(x))$ although the variable ``$x$'' here is a dummy variable, in that
$$T_{n,k}=[x^n] g(x)f(x)^k = [t^n] g(t)f(t)^k.$$
In this paper, we shall define a group of matrices defined by a triple of power series, and we shall demonstrate that the Riordan group is a factor group of this new group.

Having defined products and inverses in this new group, we look at some additional properties, and give some examples. Our first example in this section is based on a special transformation that has been studied in the context of Hankel plus Toeplitz matrices \cite{Toeplitz, Basor}

By looking at examples closely related to the Catalan numbers, we arrive at transformations of sequences that have interesting Hankel transform properties. Here we recall that for a sequence $a_n$ we define its Hankel transform to be the sequence of determinants $h_n=|a_{i+j}|_{0 \le i,j \le n}$.

All the power series and matrices that we shall look at are assumed to have integer coefficients. Thus power series are elements of $\mathbb{Z}[[x]]$. The generating function $1$ generates the sequence that we denote by $0^n$, which begins $1,0,0,0,\ldots$. All matrices are assumed to begin at the $(0,0)$ position, and to extend infinitely to the right and downwards. Thus matrices in this article are elements of $\mathbb{Z}^{\mathbb{N}_0 \times \mathbb{N}_0}$. When examples are given, an obvious truncation is applied.

The Fundamental Theorem of Riordan arrays \cite{Survey} says that the action of a Riordan array on a power series, namely
$$ (g(x), f(x))\cdot a(x)= g(x)a(f(x)),$$ is realised in matrix form by
$$\left(T_{n,k}\right)\left( \begin{array}{c}a_0\\a_1\\a_2\\a_3\\ \vdots\end{array}\right)
=\left( \begin{array}{c}b_0\\b_1\\b_2\\b_3\\ \vdots\end{array}\right),$$ where
the power series $a(x)$ expands to give the sequence $a_0, a_1, a_2, \ldots$, and the image sequence $b_0, b_1, b_2, \ldots$ has generating function $g(x)a(f(x))$.

An important feature of Riordan arrays is that they have a number of sequence characterizations \cite{Cheon, He}. The simplest of
these
is as follows.
\begin{proposition} \label{Char} \cite[Theorem 2.1, Theorem 2.2]{He} Let $D=[d_{n,k}]$ be an infinite triangular matrix. Then $D$ is a Riordan array if and only if there
exist two sequences $A=[a_0,a_1,a_2,\ldots]$ and $Z=[z_0,z_1,z_2,\ldots]$ with $a_0 \neq 0$, $z_0 \neq 0$ such that
\begin{itemize}
\item $d_{n+1,k+1}=\sum_{j=0}^{\infty} a_j d_{n,k+j}, \quad (k,n=0,1,\ldots)$
\item $d_{n+1,0}=\sum_{j=0}^{\infty} z_j d_{n,j}, \quad (n=0,1,\ldots)$.
\end{itemize}
\end{proposition}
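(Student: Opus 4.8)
The plan is to prove the two directions separately, translating in each case between the matrix $D$ and the generating functions of its columns. Write $D_k(x)=\sum_{n\ge0}d_{n,k}x^n$ for the $k$-th column of $D$, and set $A(x)=\sum_{j\ge0}a_jx^j$, $Z(x)=\sum_{j\ge0}z_jx^j$. Since $D$ is lower-triangular, every row has only finitely many nonzero entries, so all the sums that appear are finite; and since a Riordan array $(g,f)$ has $d_{0,0}=g(0)\ne0$, I take $d_{0,0}\ne0$ as a standing hypothesis and, after rescaling, assume $d_{0,0}=1$.

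For the forward implication, assume $D=(g,f)$, so that $D_k(x)=g(x)f(x)^k$. As $f(0)=0$ and $f'(0)\ne0$, the series $f$ has a compositional inverse $\bar f$, and I define $A(x)=x/\bar f(x)$ and $Z(x)=\bigl(g(\bar f(x))-1\bigr)/\bigl(\bar f(x)\,g(\bar f(x))\bigr)$. A short check (using $g(0)=1$, so that the numerator of $Z$ vanishes at $0$) shows these are honest power series, that $a_0=f'(0)\ne0$, and that they satisfy the functional equations $f(x)=xA(f(x))$ and $g(x)=1/\bigl(1-xZ(f(x))\bigr)$. Now multiply the first claimed recurrence by $x^{n+1}$ and sum over $n$: using triangularity to drop the $d_{0,k+1}$ term, it becomes the identity $D_{k+1}(x)=x\sum_{j\ge0}a_jD_{k+j}(x)$, which after substituting $D_m=gf^m$ reduces (already at $k=0$) to $f=xA(f)$; likewise the second recurrence becomes $g-1=xgZ(f)$, i.e. $g=1/(1-xZ(f))$. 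Both hold by construction, so $D$ satisfies the two recurrences with $a_0\ne0$ (one also reads off $z_0=[x^1]g=d_{1,0}$).

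For the converse, suppose $A,Z$ with $a_0\ne0$ are given. Comparing coefficients shows that $f(x)=xA(f(x))$ has a unique solution with $f(0)=0$, and $a_0\ne0$ forces $f'(0)=a_0\ne0$, so $f$ is invertible under composition; put $g(x)=1/\bigl(1-xZ(f(x))\bigr)$, a power series with $g(0)=1$. By the computation of the previous paragraph, the Riordan array $(g,f)$ satisfies the two recurrences with these same sequences $A,Z$, and its $0$-th row is $(1,0,0,\dots)$, exactly the $0$-th row of $D$. The decisive observation is then a uniqueness statement: the two recurrences together express every entry of row $n+1$ as an (automatically finite) linear combination of entries of row $n$, so a lower-triangular matrix obeying them is completely determined by its $0$-th row. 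Hence $D$ coincides with the matrix of $(g,f)$, and $D$ is a Riordan array.

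I expect the main obstacle to be not any single deep step but the accumulation of routine verifications in the converse: that $f$ and $g$ are well-defined power series of the correct shape (this is exactly where $a_0\ne0$ enters), that lower-triangularity legitimises the row-by-row induction behind the uniqueness claim, and that the normalization $d_{0,0}=1$ — implicit in ``$D$ is a Riordan array'' — is indeed available. Granting these, the whole statement reduces to the two functional equations $f=xA(f)$ and $g=1/(1-xZ(f))$ together with the elementary fact that a triangular matrix satisfying the $A$- and $Z$-recurrences is pinned down by its first row.
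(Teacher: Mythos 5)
This proposition is quoted in the paper from He and Sprugnoli (the citation \cite[Theorem 2.1, Theorem 2.2]{He}) and is not proved there, so there is no internal proof to compare against; I can only assess your argument on its own terms. It is the standard generating-function proof and its substance is correct: you encode the two recurrences as the column identities $D_{k+1}=x\sum_{j\ge0}a_jD_{k+j}$ and $D_0-d_{0,0}=x\sum_{j\ge0}z_jD_j$ (the infinite sums being formally summable by triangularity), observe that for a matrix with columns $D_k=gf^k$ these are equivalent to the functional equations $f=xA(f)$ and $g=1/\bigl(1-xZ(f)\bigr)$, verify those equations for $A=x/\bar f$ and the stated $Z$ in the forward direction, and in the converse solve $f=xA(f)$ uniquely (with $f'(0)=a_0\ne0$), set $g=1/\bigl(1-xZ(f)\bigr)$, and conclude $D=(g,f)$ because a triangular matrix obeying the two recurrences is determined row by row from its top row. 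All of these steps check out.

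Two caveats, both traceable to the statement as transcribed rather than to your argument. First, the proposition as printed demands $z_0\ne0$, and no proof can deliver that: your own computation gives $z_0=d_{1,0}$, which vanishes, for example, for the identity array $(1,x)$, whose only admissible $Z$-sequence is identically zero; so the forward direction is false if $z_0\ne0$ is insisted upon (the source requires $d_{0,0}\ne0$, not $z_0\ne0$). You should state explicitly that you are proving the corrected version without that condition. Second, your standing hypothesis $d_{0,0}\ne0$ (normalized to $1$) is not cosmetic: without it the zero matrix satisfies both recurrences and is not a Riordan array, so the converse genuinely needs it; in this paper it is hidden in the normalization conventions. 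With these provisos made explicit, your proof is correct.
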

The coefficients $a_0,a_1,a_2,\ldots$ and $z_0,z_1,z_2,\ldots$ are called the $A$-sequence and the $Z$-sequence of the Riordan array
$D=(g(x),f(x))$, respectively.
Letting $A(x)$ be the generating function of the $A$-sequence and $Z(x)$ be the generating function of the $Z$-sequence, we have
\begin{equation}\label{AZ_eq} A(x)=\frac{x}{\bar{f}(x)}, \quad Z(x)=\frac{1}{\bar{f}(x)}\left(1-\frac{1}{g(\bar{f}(x))}\right).\end{equation}
Here, $\bar{f}(x)$ is the series reversion of $f(x)$, defined as the solution $u(x)$ of the equation
$$f(u)=x$$ that satisfies $u(0)=0$.

The inverse of the Riordan array $(g, f)$ is given by
$$(g(x), f(x))^{-1}=\left(\frac{1}{g(\bar{f}(x))}, \bar{f}(x)\right).$$

For a Riordan array $D$, the matrix $P=D^{-1}\cdot \overline{D}$ is called its \emph{production matrix}, where $\overline{D}$ is the matrix $D$ with its top row removed.

The
concept of a \emph{production matrix} \cite{ProdMat_0,
ProdMat}
is a general one, but for this work we find it convenient to
review it in
the context of Riordan arrays. Thus let $P$ be an infinite
matrix (most often it will have integer entries). Letting
$\mathbf{r}_0$
be the row vector
$$\mathbf{r}_0=(1,0,0,0,\ldots),$$ we define $\mathbf{r}_i=\mathbf{r}_{i-1}P$, $i \ge 1$.
Stacking these rows leads to another infinite matrix which we
denote by
$A_P$. Then $P$ is said to be the \emph{production matrix} for
$A_P$.

\noindent If we let $$u^T=(1,0,0,0,\ldots,0,\ldots)$$ then we
have $$A_P=\left(\begin{array}{c}
u^T\\u^TP\\u^TP^2\\\vdots\end{array}\right)$$ and
$$\bar{I}A_P=A_PP$$ where $\bar{I}=(\delta_{i+1,j})_{i,j \ge 0}$ (where
$\delta$ is the usual Kronecker symbol):
\begin{displaymath} \bar{I}=\left(\begin{array}{ccccccc} 0 & 1
& 0 & 0 & 0 & 0 & \ldots \\0 & 0 & 1 & 0 & 0 & 0 & \ldots \\
0 & 0 & 0 & 1
& 0 & 0 & \ldots \\ 0 & 0 & 0 & 0 & 1 & 0 & \ldots \\ 0 & 0 &
0
& 0 & 0 & 1 & \ldots \\0 & 0  & 0 & 0 & 0 & 0 &\ldots\\ \vdots
& \vdots &
\vdots & \vdots & \vdots & \vdots &
\ddots\end{array}\right).\end{displaymath}
We have
\begin{equation}P=A_P^{-1}\bar{I}A_P.\end{equation} Writing
$\overline{A_P}=\bar{I}A_P$, we can write this equation as \begin{equation}P=A_P^{-1}\overline{A_P}.\end{equation} Note that $\overline{A_P}$ is $A_P$ with the first row removed.

The production matrix $P$ is sometimes \cite{P_W, Shapiro_bij} called the Stieltjes matrix $S_{A_P}$
associated to $A_P$. Other examples of the use of production matrices can be found in \cite{Arregui}, for instance.

\noindent The sequence formed by the row sums of $A_P$ often
has combinatorial significance and is called the sequence
associated to $P$. Its general
term $a_n$ is given by $a_n = u^T P^n e$ where
$$e=\left(\begin{array}{c}1\\1\\1\\\vdots\end{array}\right).$$
In the context of Riordan
arrays, the production matrix associated to a proper Riordan
array takes on a special form\,:
 \begin{proposition} \label{RProdMat}
\cite[Proposition 3.1]{ProdMat}\label{AZ} Let $P$ be
an infinite production matrix and let $A_P$ be the matrix
induced by $P$. Then $A_P$ is an (ordinary) Riordan matrix if
and only if $P$ is
of the form \begin{displaymath} P=\left(\begin{array}{ccccccc}
\xi_0 & \alpha_0 & 0 & 0 & 0 & 0 & \ldots \\\xi_1 & \alpha_1 &
\alpha_0 & 0 &
0 & 0 & \ldots \\ \xi_2 & \alpha_2 & \alpha_1 & \alpha_0 & 0 &
0 & \ldots \\ \xi_3 & \alpha_3 & \alpha_2 & \alpha_1 &
\alpha_0
& 0 & \ldots
\\ \xi_4 & \alpha_4 & \alpha_3 & \alpha_2 & \alpha_1 &
\alpha_0
& \ldots \\\xi_5 & \alpha_5  & \alpha_4 & \alpha_3 & \alpha_2
&
\alpha_1
&\ldots\\ \vdots & \vdots & \vdots & \vdots & \vdots & \vdots
&
\ddots\end{array}\right),\end{displaymath} where $\xi_0 \neq 0$, $\alpha_0 \neq 0$. Moreover, columns $0$
and $1$ of
the matrix $P$ are the $Z$- and $A$-sequences,
respectively, of the Riordan array $A_P$. \end{proposition}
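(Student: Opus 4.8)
The plan is to derive both implications from the defining identity $\overline{A_P}=A_PP$, which in entry form reads $d_{n+1,k}=\sum_{j\ge 0}d_{n,j}P_{j,k}$, where I write $D=[d_{n,k}]:=A_P$ and $\overline{D}$ for $D$ with its first row deleted. The bridge to the sequence characterization in Proposition~\ref{Char} is a single index shift that matches the production-matrix recursion with the $A$- and $Z$-sequence recursions.

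For the ``if'' direction I would assume $P$ has the displayed shape, so that $P_{j,0}=\xi_j$ and, for $k\ge 1$, $P_{j,k}=\alpha_{j-k+1}$ when $j\ge k-1$ and $P_{j,k}=0$ otherwise. I would first check, by induction on $n$, that $d_{n,k}=0$ whenever $k>n$: the base case is the first row $u^T=(1,0,0,\ldots)$ of $A_P$, and for $k\ge n+2$ every term of $d_{n+1,k}=\sum_{j=0}^{n}d_{n,j}P_{j,k}$ vanishes since $j\le n<k-1$. This shows $D$ is lower triangular with $d_{0,0}=1$, and makes all the sums involved finite. Specialising $d_{n+1,k}=\sum_j d_{n,j}P_{j,k}$ to $k=0$ then gives $d_{n+1,0}=\sum_{j\ge 0}\xi_j d_{n,j}$, and to $k=m+1$, after substituting $j=m+i$, gives $d_{n+1,m+1}=\sum_{i\ge 0}\alpha_i d_{n,m+i}$. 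These are precisely the two conditions of Proposition~\ref{Char} with $Z$-sequence $(\xi_0,\xi_1,\ldots)$ and $A$-sequence $(\alpha_0,\alpha_1,\ldots)$; the hypotheses $\xi_0\ne 0$ and $\alpha_0\ne 0$ supply the required $z_0\ne 0$ and $a_0\ne 0$. Hence $A_P$ is a Riordan array whose $Z$- and $A$-sequences are columns $0$ and $1$ of $P$, which also settles the ``moreover'' clause.

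For the ``only if'' direction I would assume $A_P=D$ is a Riordan array, take its $A$-sequence $(a_j)$ and $Z$-sequence $(z_j)$ from Proposition~\ref{Char} (with $a_0,z_0\ne 0$), and form the matrix $P'$ of the displayed shape with $\xi_j:=z_j$ and $\alpha_j:=a_j$. Running the previous bookkeeping in reverse should show that $\sum_j d_{n,j}P'_{j,k}$ equals $d_{n+1,0}$ for $k=0$ (by the $Z$-relation) and equals $d_{n+1,k}$ for $k\ge 1$ (by the $A$-relation with $m=k-1$), that is, $DP'=\overline{D}$. Since also $DP=\overline{D}$ by the defining identity and $D$, being a Riordan array, is invertible (an explicit inverse was recalled above), this forces $P=P'$, which has the required shape, with column $0$ the $Z$-sequence and column $1$ the $A$-sequence.

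I expect the only delicate point to be the mild circularity in the ``if'' direction: the recursion $d_{n+1,k}=\sum_j d_{n,j}P_{j,k}$ must be known to be a finite sum before it can be rearranged, and this has to be obtained first, purely from the vanishing pattern $P_{j,k}=0$ for $j<k-1$, via the triangularity induction. Everything else is the routine index shift $k\mapsto m+1$ reconciling the production-matrix convention with the convention of Proposition~\ref{Char}, so I do not anticipate any genuine obstacle beyond that.
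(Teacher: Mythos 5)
The paper itself does not prove this proposition: it is imported verbatim, with a citation, from Deutsch, Ferrari and Rinaldi (Proposition 3.1 of the cited work), so there is no internal argument to compare yours against; your proposal is in effect supplying the missing proof. On its merits, your argument is correct and self-contained relative to this paper: the identity $\overline{A_P}=A_PP$ is literally the defining recursion $d_{n+1,k}=\sum_j d_{n,j}P_{j,k}$, your triangularity induction (using $P_{j,k}=0$ for $j<k-1$) is exactly what is needed to make the sums finite and to place $D=A_P$ in the scope of Proposition~\ref{Char}, and the index shift $k=m+1$ correctly converts the displayed shape of $P$ into the $A$- and $Z$-recursions and back, which also yields the ``moreover'' clause in both directions. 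In the only-if direction the cancellation $DP=DP'\Rightarrow P=P'$ is legitimate because $D$ and $D^{-1}$ are lower triangular, hence row-finite, so $D^{-1}(DP)=(D^{-1}D)P$ holds entrywise; and the rows of $A_P$ are finitely supported once $A_P$ is assumed Riordan, so $DP=\overline{D}$ is a genuine entrywise identity. One remark, not a flaw in your argument: the nonvanishing conditions $\xi_0\neq0$ (equivalently $z_0\neq0$ in Proposition~\ref{Char}) are conventions of this paper's formulation, which implicitly restricts to Riordan arrays whose $Z$-sequence starts with a nonzero term; your proof correctly tracks these hypotheses as stated rather than needing to repair them. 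Compared with the cited source, which phrases the argument through the generating-function form of the production-matrix identity, your reduction to the sequence characterization of Proposition~\ref{Char} is the standard and arguably the most economical route, and it fits naturally into this paper since that characterization is already quoted.
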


We shall use the notation $\tilde{a}(x)=\sum_{n=1}a_n x^n=\frac{a(x)-a_0}{x}$ in the sequel, where $a(x)=\sum_{n=0}^n a_n x^n$.

Where possible, we shall refer to known sequences and triangles by their OEIS numbers \cite{SL1, SL2}. For instance, the Catalan numbers $C_n=\frac{1}{n+1}\binom{2n}{n}$ with g.f. $c(x)=\frac{1-\sqrt{1-4x}}{2x}$ is the sequence \seqnum{A000108}, the Fibonacci numbers are \seqnum{A000045}, and the Motzkin numbers $M_n=\sum_{k=0}^{\lfloor \frac{n}{2} \rfloor}\binom{n}{2k}C_k$  are \seqnum{A001006}.

The binomial matrix $B=\left(\binom{n}{k}\right)$ is \seqnum{A007318}. As a Riordan array, this is given by
$$B= \left(\frac{1}{1-x}, \frac{x}{1-x}\right).$$

Note that in this article all sequences $a_n$ that have $a_0 \ne 0$ are assumed to have $a_0=1$. Likewise for sequences $b_n$ with $b_0=0$ and $b_1 \ne 0$, we assume that $b_1=1$.

\section{Definitions and Properties}

An \emph{almost-Riordan array} is defined by an ordered triple $(a,g,f)$ of power series where
$a(x)=\sum_{n=0}^{\infty}a_nx^n ,$ with $a_0=1$, $g(x)=\sum_{n=0}^{\infty} g_n x^n$, with $g_0=1$, and
$f(x)=\sum_{n=0}^{\infty} f_n x^n$, with $f_0=0,\,f_1=1$. The array is identified with the lower-triangular matrix defined as follows: its first column is given by the expansion of $a(x)$, while its first row is the expansion of $1$. The remaining elements of the infinite tri-diagonal matrix (starting at the $(1,1)$ position) coincide with the Riordan array $(g,f)$. Here, we address the first element of the matrix as the $(0,0)$-th element.

We shall denote by $a\mathcal{R}$ the set of almost-Riordan arrays. Formally this is the set of ordered triples $(a,g,f)$ as described above. We identify these triples with lower-triangular matrices as in the example that follows. We define an action of the element $(a, g, f)$ on the power series $b(x)$ by looking at the action of the corresponding matrix on the column vector given by the expansion of $b(x)$. The result $(a, g, f)\cdot b$ is then the generating function of the sequence encapsulated in the column vector that arises by applying the matrix to the column vector whose elements are given by the expansion of $b(x)$.
\begin{example}
We consider the almost-Riordan array defined by $\left(\frac{1}{1-2x}, \frac{1}{1-x}, \frac{x}{1-x}\right)$. This matrix begins
$$\left(
\begin{array}{ccccccc}
 1 & 0 & 0 & 0 & 0 & 0 & 0 \\
 2 & 1 & 0 & 0 & 0 & 0 & 0 \\
 4 & 1 & 1 & 0 & 0 & 0 & 0 \\
 8 & 1 & 2 & 1 & 0 & 0 & 0 \\
 16 & 1 & 3 & 3 & 1 & 0 & 0 \\
 32 & 1 & 4 & 6 & 4 & 1 & 0 \\
 64 & 1 & 5 & 10 & 10 & 5 & 1 \\
\end{array}
\right).$$ Then
$$\left(\frac{1}{1-2x}, \frac{1}{1-x}, \frac{x}{1-x}\right)\cdot \frac{1}{1-x-x^2}=\frac{1-2x-x^2}{1-5x+7x^2-2x^3}$$ is realised in matrix form by
$$\left(
\begin{array}{ccccccc}
 1 & 0 & 0 & 0 & 0 & 0 & 0 \\
 2 & 1 & 0 & 0 & 0 & 0 & 0 \\
 4 & 1 & 1 & 0 & 0 & 0 & 0 \\
 8 & 1 & 2 & 1 & 0 & 0 & 0 \\
 16 & 1 & 3 & 3 & 1 & 0 & 0 \\
 32 & 1 & 4 & 6 & 4 & 1 & 0 \\
 64 & 1 & 5 & 10 & 10 & 5 & 1 \\
\end{array}\right)\cdot \left(\begin{array}{c}1\\1\\2\\3\\5\\8\\13\\ \end{array}\right)=
\left(\begin{array}{c}1\\3\\7\\16\\37\\87\\208\\ \end{array}\right), $$ where the expansion of the image
$\frac{1-2x-x^2}{1-5x+7x^2-2x^3}$ begins $1,3,7,16,37,\ldots$.

\end{example}

We write $(a,0,0)$ for the matrix whose first column is generated by $a(x)$, with zeros elsewhere.
We then have

$$(a,g,f)=(a,0,0)+(xg, f)$$ as a matrix equality.

The elements of the matrix $M=(a, g,f)$ are easily described. Letting the $(n,k)$-th element of $M$ be denoted by $M_{n,k}$, we have
$$M_{n,k}=[x^{n-1}] g f^{k-1}, \quad \textrm{for\,} n,k \ge 1,\quad M_{n,0}=a_n,\quad M_{0,k}=0^k.$$
\begin{example} The almost-Riordan array $\left(1, \frac{1}{1-x}, \frac{x}{1-x}\right)$ has general term
$\binom{n-1}{n-k}$. In this case, this matrix coincides with the Riordan array $\left(1, \frac{x}{1-x}\right)$.
\end{example}
Our first result is the Fundamental Theorem of almost-Riordan arrays.

\begin{proposition} Let $(a, g, f)$ define an almost-Riordan array, and consider a power series $h(x)=\sum_{n=0}^{\infty}h_n x^n$. We have
\begin{equation} (a, g, f) \cdot h(x)=h_0 a(x)+x g(x) \tilde{h}(f(x)),\end{equation} where
$$\tilde{h}(x)=\frac{h(x)-h_0}{x}.$$
\end{proposition}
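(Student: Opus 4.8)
The plan is to compute the action of $M=(a,g,f)$ on a power series column by column, which is made transparent by the matrix decomposition $(a,g,f)=(a,0,0)+(xg,f)$ recorded above. If $h(x)=\sum_{n\ge 0}h_nx^n$ expands to the column vector $(h_0,h_1,h_2,\ldots)^T$, then the image vector has $n$-th entry $\sum_{k\ge 0}M_{n,k}h_k$, which is a finite sum because $M$ is lower-triangular ($M_{n,k}=0$ for $k>n$). Passing to generating functions and rearranging the resulting doubly-indexed sum — legitimate in $\mathbb{Z}[[x]]$ since the $k$-th column generating function has order $\ge k$ — gives
\[ (a,g,f)\cdot h(x)=\sum_{n\ge 0}\Big(\sum_{k\ge 0}M_{n,k}h_k\Big)x^n=\sum_{k\ge 0}h_k\,c_k(x), \]
where $c_k(x)=\sum_{n\ge 0}M_{n,k}x^n$ is the generating function of column $k$ of $M$. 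The whole statement thus reduces to identifying these $c_k(x)$.

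From the definition of $M$ — first column generated by $a(x)$, and the submatrix based at the $(1,1)$ position equal to the Riordan array $(g,f)$ — one reads off $c_0(x)=a(x)$, while for $k\ge 1$ column $k$ of $M$ is column $k-1$ of $(g,f)$, whose generating function is $g(x)f(x)^{k-1}$, shifted down by one row; shifting down multiplies the generating function by $x$, so $c_k(x)=x\,g(x)f(x)^{k-1}$ (this is exactly $(a,g,f)=(a,0,0)+(xg,f)$ read columnwise). Substituting and reindexing $j=k-1$,
\[ (a,g,f)\cdot h(x)=h_0\,a(x)+\sum_{k\ge 1}h_k\,x\,g(x)f(x)^{k-1}=h_0\,a(x)+x\,g(x)\sum_{j\ge 0}h_{j+1}f(x)^{j}, \]
and recognising the last sum as $\tilde h(f(x))$, since $\tilde h(x)=(h(x)-h_0)/x=\sum_{j\ge 0}h_{j+1}x^j$, produces the claimed identity (the constant-term check $h_0$ at $n=0$ is automatic, as $c_k(x)$ has no constant term for $k\ge 1$).

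I do not expect any real obstacle: the argument is bookkeeping, and the only points deserving a word of care are the finiteness of the sums defining the matrix action (so the rearrangement into column generating functions is valid in $\mathbb{Z}[[x]]$) and the single index shift $k\mapsto k-1$ relating the columns of $M$ to those of $(g,f)$. A slightly more conceptual alternative would be to apply the Fundamental Theorem of Riordan arrays directly: $(g,f)$ acting on the vector $(h_1,h_2,\ldots)^T$ — the expansion of $\tilde h$ — yields $g(x)\tilde h(f(x))$; accounting for the embedding of $(g,f)$ at position $(1,1)$ (a downward shift, i.e.\ multiplication by $x$) and adding the zeroth-column contribution $h_0a(x)$ gives the same formula.
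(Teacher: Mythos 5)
Your proof is correct and follows essentially the same route as the paper: the paper likewise expands the action columnwise, writing the matrix as the sequence of column generating functions $a, xg, xgf, xgf^2,\ldots$ and summing $h_0a + h_1xg + h_2xgf + \cdots = h_0a + xg\,\tilde{h}(f)$. Your additional remarks on the validity of the rearrangement in $\mathbb{Z}[[x]]$ and the index shift are just a more careful rendering of the same bookkeeping.
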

\begin{proof}
We have
\begin{eqnarray*}
(a, g, f) \cdot h(x)&=&(a, xg, xgf, xgf^2,\cdots)\cdot \left(\begin{array}{c} h_0\\h_1\\h_2\\h_3\\ \vdots  \end{array}\right)\\
&=& h_0 a + h_1 x g+ h_2 xgf + h_3 x g f^2+\cdots \\
&=& h_0 a + x g(h_1+h_2 f + h_3 f^2+ \cdots) \\
&=& h_o a + xg \tilde{h}(f).\end{eqnarray*}
\end{proof}
\begin{corollary} We have
\begin{equation}(a, g, f)\cdot 1 = a.\end{equation}
\end{corollary}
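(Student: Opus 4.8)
The final statement is the Corollary: $(a, g, f)\cdot 1 = a$.

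The plan is to apply the Fundamental Theorem of almost-Riordan arrays (the immediately preceding Proposition) to the specific power series $h(x) = 1$. Writing $h(x) = \sum_{n=0}^{\infty} h_n x^n$, the constant series $1$ corresponds to $h_0 = 1$ and $h_n = 0$ for all $n \ge 1$. First I would record these coefficient values and observe that consequently $\tilde{h}(x) = \frac{h(x) - h_0}{x} = \frac{1 - 1}{x} = 0$ as a formal power series.

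Next I would substitute directly into the formula $(a, g, f)\cdot h(x) = h_0\, a(x) + x g(x)\, \tilde{h}(f(x))$. Since $\tilde{h}$ is the zero series, $\tilde{h}(f(x)) = 0$, so the second term vanishes identically; and since $h_0 = 1$, the first term is simply $a(x)$. This yields $(a, g, f)\cdot 1 = a(x)$, which is the claim.

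There is essentially no obstacle here: the only point worth a sentence of care is that composing the zero series with $f(x)$ (which has $f_0 = 0$, so the composition is well-defined) again gives the zero series, so the term $x g(x)\tilde{h}(f(x))$ really does disappear rather than merely having no constant term. Alternatively, as a sanity check one can argue at the matrix level: the first column of the matrix $(a,g,f)$ is exactly the expansion of $a(x)$ by the defining description of almost-Riordan arrays, and acting on the column vector $(1,0,0,\ldots)^{T}$ (the expansion of the series $1$) simply extracts that first column. Either route gives the result immediately.
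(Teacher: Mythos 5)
Your proposal is correct and follows essentially the same route as the paper: the paper also specializes the Fundamental Theorem to $h=1$, noting $\tilde{1}=0$, so that $(a,g,f)\cdot 1 = 1\cdot a + xg\,\tilde{1}(f) = a$. Your extra remarks (the composition of the zero series with $f$ and the matrix-level sanity check) are fine but not needed.
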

\begin{proof} This follows since we have
$$(a, g, f)\cdot 1=1.a+xg \tilde{1}(f)=a,$$ since
$\tilde{1}=0$.
\end{proof}

We next define the product of two almost-Riordan arrays. Thus consider the almost-Riordan arrays
$(a, g, f)$ and $(b, u, v)$. We define their product by
\begin{equation} (a, g, f) \cdot (b,u, v)= ( (a, g, f) b, g u(f), v(f)).\end{equation}
By construction, the product of two almost-Riordan arrays is again an almost-Riordan array.

We define $I=(1,1,x)$. We have
\begin{proposition}
$$ I \cdot (b, u, v)=(b,u,v), \quad \quad (a,f,g) \cdot I = (a, f, g).$$
\end{proposition}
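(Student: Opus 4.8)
The plan is to verify both identities by a direct computation from the definition of the product $(a,g,f)\cdot(b,u,v)=((a,g,f)b,\,gu(f),\,v(f))$, using the Fundamental Theorem of almost-Riordan arrays and the Corollary $(a,g,f)\cdot 1=a$ to evaluate the first component (the one obtained by acting on a power series).

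First I would treat the left identity. Applying the product formula to $I\cdot(b,u,v)=(1,1,x)\cdot(b,u,v)$ with $a=g=1$ and $f=x$ gives the triple $\big((1,1,x)\cdot b,\ 1\cdot u(x),\ v(x)\big)$. For the first component, the Fundamental Theorem yields $(1,1,x)\cdot b=b_0\cdot 1+x\cdot 1\cdot\tilde b(x)=b_0+x\tilde b(x)$, and since $x\tilde b(x)=b(x)-b_0$ this equals $b(x)$. The second component is $u(x)=u$, and the third, $v$ composed with the series $x$, is again $v$. Hence $I\cdot(b,u,v)=(b,u,v)$.

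Next I would treat the right identity. Applying the product formula to $(a,g,f)\cdot I=(a,g,f)\cdot(1,1,x)$ gives the triple $\big((a,g,f)\cdot 1,\ g\cdot 1(f),\ f\big)$, where the third slot is the series $x$ evaluated at $f$. The first component equals $a$ by the Corollary, and the second equals $g$ because the constant series $1$ composed with $f$ is $1$. Hence $(a,g,f)\cdot I=(a,g,f)$. (We remark that the second displayed equation in the statement should read $(a,g,f)\cdot I=(a,g,f)$.)

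I do not anticipate any real obstacle here: the whole argument rests on the elementary facts that $x\tilde b(x)=b(x)-b_0$, that $1\circ f=1$, and that $v\circ x=v$, together with the Corollary. The only points requiring a little care are the bookkeeping of which computed series occupies which of the three slots when the product rule is applied, and the (immediate) observation that $I=(1,1,x)$ genuinely belongs to $a\mathcal{R}$, since its defining series satisfy $a_0=1$, $g_0=1$, $f_0=0$, and $f_1=1$.
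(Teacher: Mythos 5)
Your proof is correct and follows essentially the same route as the paper: apply the product formula, evaluate the first slot via the Fundamental Theorem (or the Corollary $(a,g,f)\cdot 1=a$), and use $1\circ f=1$, $x\circ f=f$, and $b_0+x\tilde b(x)=b$ for the remaining slots. Your observation that the statement's ``$(a,f,g)$'' should read $(a,g,f)$ matches what the paper's own proof actually computes.
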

\begin{proof}
$$I \cdot (b, u, v)=(1,1,x)\cdot (b,u,v)=((1,1,x) b, 1\, u(x), v(x))=((1,1,x) b, u, v).$$
Now $$(1,1,x) b= b_0 \,1 + x\,1\,\tilde{b}(x)=1 + x \frac{b-1}{x}=b.$$
Thus $$ I \cdot (b, u ,v) = (b, u, v).$$

Now
$$(a, g, f) \cdot I = (a,g,f) \cdot (1,1,x)=((a,g,f)\cdot 1, g\,1(f), x(f))=((a,g,f)\cdot 1, g, f).$$
We have
$$(a, g,f)\cdot 1=1. a+ x g \tilde{1}(f)=a \quad \textrm{since\,} \tilde{1}=0.$$
Thus
$$(a, g, f) \cdot I = (a, g, f).$$

\end{proof}

Thus $I=(1,1,x)$ is an identity element for the set of almost-Riordan arrays. We have elaborated the above proposition to show that the formalism works. A more direct proof is to notice that $I=(1,1,x)$ is of course the normal (infinite) identity matrix with $1$'s on the diagonal and zeros elsewhere.

We now turn to look at the inverse of an almost-Riordan array.
\begin{example} We consider the almost-Riordan array given by $(1, g, f)$. Its inverse is given by
\begin{equation}(1, g, f)^{-1}=\left(1, \frac{1}{g(\bar{f})}, \bar{f}\right).\end{equation} In other words, it is the matrix with first row and columns generated by $1$, and starting at the $(1,1)$-position, it coincides with the inverse Riordan array $(g, f)^{-1}$.
This result is an immediate consequence of standard matrix partitioning.
\end{example}

\begin{proposition} The inverse of the almost-Riordan array $(a, g, f)$ is the almost-Riordan array
\begin{equation} (a, g, f)^{-1}=\left(a^*, \frac{1}{g(\bar{f})}, \bar{f}\right),\end{equation} where
\begin{equation} a^*(x)=(1, -g, f)^{-1} \cdot a(x).\end{equation}
\end{proposition}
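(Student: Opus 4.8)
The plan is to verify directly, via the definition of the product and the Fundamental Theorem of almost-Riordan arrays, that the triple on the right is the inverse of $(a,g,f)$. Two preliminary remarks. First, $(a,g,f)$ is a lower-triangular matrix all of whose diagonal entries equal $1$ (here $M_{0,0}=a_0=1$, and $M_{n,n}=[x^{n-1}]gf^{n-1}=g_0f_1^{n-1}=1$ for $n\ge1$); since the almost-Riordan product is realised by ordinary matrix multiplication, $(a,g,f)$ has a unique two-sided inverse and it suffices to produce a right inverse. Second, $\left(a^*,\frac{1}{g(\bar f)},\bar f\right)$ will be a genuine almost-Riordan array: $\bar f(x)=x+\cdots$ since $f_1=1$, and $\frac{1}{g(\bar f)}$ has constant term $1/g(0)=1$; that $a^*$ has constant term $1$ will come out of the computation below. (Here $(1,-g,f)$ is read as the matrix $(1,0,0)$ together with the Riordan array $(-g,f)$ placed at the $(1,1)$-position, equivalently the matrix acting on power series by $h\mapsto h_0-xg\,\tilde h(f)$; it is lower-triangular with diagonal $1,-1,-1,\ldots$, hence invertible, so $a^*=(1,-g,f)^{-1}\cdot a$ is well defined.)

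Applying the product rule to $(a,g,f)\cdot\left(a^*,\frac{1}{g(\bar f)},\bar f\right)$, the second component is $g(x)\cdot\frac{1}{g(\bar f(f(x)))}=\frac{g(x)}{g(x)}=1$ and the third is $\bar f(f(x))=x$, so the product equals $\left((a,g,f)\cdot a^*,\,1,\,x\right)$. Everything therefore reduces to showing that $(a,g,f)\cdot a^*=1$, i.e.\ that the first column of the product is $(1,0,0,\ldots)^{T}$.

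For this I would use the Fundamental Theorem twice. For any power series $h$ it gives $(a,g,f)\cdot h=h_0\,a+xg\,\tilde h(f)$, and the same computation applied to $(1,-g,f)$ --- its proof uses only $f_1=1$ and is unaffected by the middle component $-g$ having constant term $-1$ --- gives $(1,-g,f)\cdot h=h_0-xg\,\tilde h(f)$. Since $xg\,\tilde h(f)$ has zero constant term, the equation $(a,g,f)\cdot h=1$ forces $h_0=1$ and then $xg\,\tilde h(f)=1-a$; and these two conditions together are exactly equivalent to $(1,-g,f)\cdot h=a$. Such an $h$ is unique: $h_0=1$ is forced, then $\tilde h(f(x))=\frac{1-a(x)}{xg(x)}$ (a legitimate power series, the numerator being divisible by $x$ and $g$ a unit) determines $\tilde h$ upon composing with $\bar f$, and then $h=1+x\tilde h$; in particular $h_0=1$, which is the promised fact about the constant term of $a^*$. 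Hence the unique solution is $h=(1,-g,f)^{-1}\cdot a=a^*$, so $(a,g,f)\cdot a^*=1$, the product equals $I=(1,1,x)$, and the proposition follows.

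The only genuinely delicate point is the interpretation of $(1,-g,f)$ and the fact that the Fundamental Theorem formula applies to it; everything else --- the constant-term bookkeeping and the cancellations $g/g=1$ and $\bar f(f(x))=x$ --- is routine. It is precisely the Fundamental Theorem formula for $(1,-g,f)$ that converts the requirement ``the first column of $(a,g,f)\cdot\left(a^*,\frac{1}{g(\bar f)},\bar f\right)$ equals $(1,0,0,\ldots)^{T}$'' into the defining relation $(1,-g,f)\cdot a^*=a$ for $a^*$.
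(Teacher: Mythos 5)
Your proof is correct, but it follows a noticeably different route from the paper's. The paper verifies both products $(a,g,f)\cdot\left(a^*,\frac{1}{g(\bar f)},\bar f\right)=I$ and $\left(a^*,\frac{1}{g(\bar f)},\bar f\right)\cdot(a,g,f)=I$, each time reducing by partitioning to a first-column computation and then using the explicit formula $(1,-g,f)^{-1}=\left(1,-\frac{1}{g(\bar f)},\bar f\right)$ to expand $a^*$; in particular it obtains the closed form $a^*=a_0-\frac{x}{g(\bar f)}\,\tilde a(\bar f)$, which the paper reuses later (e.g.\ in computing the $\omega$-sequence of the production matrix). You instead check only the right-inverse identity, justifying that this suffices because $(a,g,f)$ is unit lower-triangular so its two-sided matrix inverse is unique, and you avoid the explicit inverse of $(1,-g,f)$ altogether: you show that the equation $(a,g,f)\cdot h=1$ is equivalent, by comparing constant terms in $h_0a+xg\tilde h(f)=1$ versus $h_0-xg\tilde h(f)=a$, to the defining relation $(1,-g,f)\cdot h=a$, which $a^*$ satisfies by construction. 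Your version is shorter and has the conceptual advantage of explaining \emph{why} the formula $a^*=(1,-g,f)^{-1}\cdot a$ is the right one (it is precisely the first-column condition for a right inverse), whereas the paper's computation, though more laborious and two-sided, delivers the explicit expression for $a^*$ as a by-product. Both arguments rest on the same unstated facts (the product rule agrees with matrix multiplication, and the Fundamental Theorem formula extends to the triple $(1,-g,f)$ with middle entry of constant term $-1$), which you at least flag explicitly; so there is no gap, only a cleaner one-sided verification in place of the paper's direct two-sided calculation.
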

\begin{proof}
We need to show that

$$ (a, g, f)\cdot \left(a^*, \frac{1}{g(\bar{f})}, \bar{f}\right)=I,$$ and
$$ \left(a^*, \frac{1}{g(\bar{f})}, \bar{f}\right) \cdot (a, g, f)=I.$$

Clearly, for the first expression, we only need to show that
$$(a, g, f)\cdot a^*=1,$$ since the result then follows by matrix partitioning.
We have
\begin{eqnarray*}
(a, f, g)\cdot a^*&=& (a,g,f)\cdot \left(1, -\frac{1}{g(\bar{f})}, \bar{f}\right)\cdot a(x)\\
&=& \left((a,g,f)\cdot 1, g.-\frac{1}{g(\bar{f}(f))}, \bar{f}(f)\right)\cdot a(x)\\
&=& (a, -1, x) \cdot a\\
&=& a_0 a - x\tilde{a}(x)\\
&=& a-x \frac{a(x)-1}{x}\\
&=& a- a+1\\
&=& 1 \end{eqnarray*}

We must now show that

$$ \left(a^*, \frac{1}{g(\bar{f})}, \bar{f}\right) \cdot (a, g, f)=I.$$
Again, by matrix partitioning, we need to show that
$$ \left(a^*, \frac{1}{g(\bar{f})}, \bar{f}\right)\cdot a =1.$$
We have
\begin{eqnarray*}
\left(a^*, \frac{1}{g(\bar{f})}, \bar{f}\right)\cdot a&=&a_0 a^*+\frac{x}{g(\bar{f})}\tilde{a}(\bar{f})\\
&=& (1, -g, f)^{-1}\cdot a(x)+\frac{x}{g(\bar{f})}\tilde{a}(\bar{f})\\
&=& \left(1, -\frac{1}{g(\bar{f})}, \bar{f}\right)\cdot a(x)+\frac{x}{g(\bar{f})}\tilde{a}(\bar{f})\\
&=& a_0\,1-\frac{x}{g(\bar{f})}\tilde{a}(\bar{f})+\frac{x}{g(\bar{f})}\tilde{a}(\bar{f})\\
&=& 1.\end{eqnarray*}
\end{proof}

Thus the set of almost-Riordan arrays is in fact a group. We denote this group by $a\mathcal{R}$.

The group of Riordan arrays $\mathcal{R}$ is a subgroup of this group, identified as the subgroup of almost-Riordan arrays of the form
$$\left(g, g\frac{f}{x}, f\right).$$
Let us verify that the subset of $a\mathcal{R}$ consisting of arrays of the form $\left(g, g\frac{f}{x}, f\right)$ is closed under the product of $a\mathcal{R}$. Thus let
$\left(g, g\frac{f}{x}, f\right)$ and $\left(u, u \frac{v}{x}, v\right)$ be two elements of this subset.
We have
\begin{eqnarray*}
\left(g, g\frac{f}{x}, f\right) \cdot \left(u, u \frac{v}{x}, v\right)&=&
\left(\left(g, g\frac{f}{x}, f\right) \cdot u, g\frac{f}{x} \frac{u(f) v(f)}{f}, v(f)\right)\\
&=& \left(u_0g+xg \frac{f}{x} \tilde{u}(f), gu(f) \frac{v(f)}{x}, v(f)\right)\\
&=& \left(u_0 g+gf\left(\frac{u(f)-u_0}{f}\right), gu(f) \frac{v(f)}{x}, v(f)\right)\\
&=& \left(u_0 g + g(u(f)-u_0), gu(f) \frac{v(f)}{x}, v(f)\right)\\
&=& \left(gu(f), gu(f) \frac{v(f)}{x}, v(f)\right).\end{eqnarray*}
Thus the subset is closed under products. We next show that this subset is closed under inverses.

We have
\begin{eqnarray*}
\left(g, g\frac{f}{x}, f\right)^{-1}&=& \left(g^*(x), \frac{1}{\frac{gf}{x} \circ \bar{f}}(x),\bar{f}(x)\right)\\
&=&\left(\left(1,-g \frac{f}{x}, f\right)^{-1}\cdot g, \frac{1}{\frac{g(\bar{f}(x))f(\bar{f}(x))}{\bar{f}(x)}}, \bar{f}\right)\\
&=& \left(\left(1, -\frac{1}{\frac{gf}{x}\circ \bar{f}}, \bar{f}\right)\cdot g, \frac{1}{g(\bar{f}(x))}\frac{\bar{f}(x)}{x}, \bar{f}\right)\\
&=& \left(g_0.1-x \frac{1}{\frac{gf}{x} \circ \bar{f}} \tilde{g}(\bar{f}), \frac{1}{g(\bar{f}(x))}\frac{\bar{f}(x)}{x}, \bar{f}\right)\\
&=& \left(g_0-x \frac{1}{g(\bar{f}(x))}\frac{\bar{f}(x)}{x} \left(\frac{g(\bar{f})-g_0}{\bar{f}}\right), \frac{1}{g(\bar{f}(x))}\frac{\bar{f}(x)}{x}, \bar{f}\right)\\
&=& \left(g_0-\frac{1}{g(\bar{f}(x))}(g(\bar{f}(x))-g_0), \frac{1}{g(\bar{f}(x))}\frac{\bar{f}(x)}{x}, \bar{f}\right)\\
&=& \left(\frac{1}{g(\bar{f}(x))}, \frac{1}{g(\bar{f}(x))}\frac{\bar{f}(x)}{x}, \bar{f}\right).\end{eqnarray*}
We thus have
\begin{proposition} The subset of $a\mathcal{R}$ of almost-Riordan arrays of the form
$$\left(g, g\frac{f}{x}, f\right)$$ is a subgroup of $a\mathcal{R}$, isomorphic to the group $\mathcal{R}$ of Riordan arrys. \end{proposition}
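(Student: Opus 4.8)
The plan is to piggyback on the two computations carried out just before the statement. Write $S\subseteq a\mathcal{R}$ for the set of triples of the form $\left(g, g\frac{f}{x}, f\right)$ with $g_0=1$, $f_0=0$, $f_1=1$. First I would note that $S$ is nonempty: taking $g=1$, $f=x$ gives $\left(1,1\cdot\frac{x}{x},x\right)=(1,1,x)=I$. The displayed calculation preceding the statement shows that the product of two elements of $S$ is again of this form, and the following calculation shows that the inverse of an element of $S$ is again of this form. Hence $S$ is closed under the product and inverse of $a\mathcal{R}$ and contains $I$, so $S$ is a subgroup of $a\mathcal{R}$.

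For the isomorphism I would introduce the map $\phi:\mathcal{R}\to S$, $\phi(g,f)=\left(g, g\frac{f}{x}, f\right)$. One checks $\phi$ is well-defined, i.e.\ the triple really is an almost-Riordan array: $g$ has constant term $1$, the series $\frac{f}{x}$ has constant term $f_1=1$ so the middle series $g\frac{f}{x}$ has constant term $1$, and $f_0=0$, $f_1=1$ — exactly the normalization demanded of an element of $a\mathcal{R}$, and by construction the image lies in $S$. Surjectivity onto $S$ is immediate from the definition of $S$; injectivity is immediate because the Riordan pair $(g,f)$ is read off as the first and third components of $\phi(g,f)$. Thus $\phi$ is a bijection $\mathcal{R}\to S$, and $\phi(1,x)=I$ sends the identity of $\mathcal{R}$ to the identity of $S$.

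It then remains to verify that $\phi$ is a homomorphism. Recalling that the product in $\mathcal{R}$ is $(g,f)\cdot(u,v)=\left(g\,u(f),\,v(f)\right)$, we get $\phi\bigl((g,f)\cdot(u,v)\bigr)=\left(g\,u(f),\,g\,u(f)\frac{v(f)}{x},\,v(f)\right)$. On the other hand, the computation displayed immediately before the statement evaluates $\left(g, g\frac{f}{x}, f\right)\cdot\left(u, u\frac{v}{x}, v\right)$ and yields precisely $\left(g\,u(f),\,g\,u(f)\frac{v(f)}{x},\,v(f)\right)$. Hence $\phi\bigl((g,f)\cdot(u,v)\bigr)=\phi(g,f)\cdot\phi(u,v)$, so $\phi$ is a bijective group homomorphism, i.e.\ an isomorphism $\mathcal{R}\xrightarrow{\ \sim\ }S$.

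There is no substantial obstacle here: the only care needed is the bookkeeping that the normalization conditions on a Riordan pair match exactly those on the associated almost-Riordan triple, and the observation that the two product formulas — the one defining the product in $\mathcal{R}$ and the one computed above for triples $\left(g, g\frac{f}{x}, f\right)$ — coincide term by term. The genuinely computational content (closure under products and inverses) has already been discharged in the preceding displays, so this proof is essentially just the assembly of those facts together with the bijection $\phi$.
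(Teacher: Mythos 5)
Your proposal is correct and follows essentially the same route as the paper: both rely on the two displayed computations (closure under products and under inverses) to establish that the subset is a subgroup, and then exhibit the obvious correspondence between $\left(g, g\frac{f}{x}, f\right)$ and $(g,f)$, verifying the homomorphism property via the product formula. The only cosmetic difference is that you orient the map $\phi$ from $\mathcal{R}$ to the subgroup rather than the reverse, and you omit the (redundant) explicit check that inverses are respected, which the paper includes.
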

\begin{proof} We have just shown that this subset is a subgroup (with identity $(1, 1, x)=\left(1, 1 \frac{x}{x}, x\right)$). There is an obvious $1-1$ correspondence between elements $\left(g, g\frac{f}{x}, f\right)$ of this subgroup and the corresponding element $(g, f) \in \mathcal{R}$. We write $\phi(.)$ for this correspondence so that $$ \phi\left(\left(g, g \frac{f}{x}, f\right)\right)=(g, f).$$ It remains to show that this is a homomorphism.
We have
\begin{eqnarray*}\phi\left( \left(g, g\frac{f}{x}, f\right) \cdot \left(u, u \frac{v}{x}, v\right)\right)&=&
\phi\left( \left(gu(f), gu(f)\frac{v(f)}{x}, v(f)\right)\right)\\
&=& (gu(f), v(f)).\end{eqnarray*}
On the other hand, we have
\begin{eqnarray*}
\phi\left(\left(g, g \frac{f}{x}, f\right)\right)\cdot \phi\left(\left(u, u \frac{v}{x}, v\right)\right)&=&
(g, f) \cdot (u, v) \\
&=& (g u(f), v(f)).\end{eqnarray*}

Similarly, we have
$$\phi\left(\left(g, g\frac{f}{x}, f\right)^{-1}\right)=\phi\left(\left(\frac{1}{g(\bar{f}(x))}, \frac{1}{g(\bar{f}(x))} \frac{\bar{f}}{x}, \bar{f}\right)\right)=\left(\frac{1}{g(\bar{f}(x))}, \bar{f}(x)\right)=(g, f)^{-1}.$$
\end{proof}

This is not the only subgroup of $a\mathcal{R}$ that is isomorphic to $\mathcal{R}$. We have
\begin{proposition} The map
$$\psi: (1, g, f) \mapsto (g, f)$$ is an isomorphism from the subset of $a\mathcal{R}$ comprised of elements of the form $(1, g, f)$ to $\mathcal{R}$.
\end{proposition}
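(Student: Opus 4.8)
The plan is to establish the three ingredients that make $\psi$ a group isomorphism: that the set $S=\{(1,g,f)\in a\mathcal{R}\}$ is a subgroup of $a\mathcal{R}$, that $\psi$ is a bijection from $S$ onto $\mathcal{R}$, and that $\psi$ is a homomorphism.

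First I would check that $S$ is closed under the product of $a\mathcal{R}$. For $(1,g,f)$ and $(1,u,v)$ in $S$ the product formula gives
$$(1,g,f)\cdot(1,u,v)=\big((1,g,f)\cdot 1,\ g\,u(f),\ v(f)\big),$$
and since $(a,g,f)\cdot 1=a$ (the Corollary), the first entry equals $1$; hence the product is $\big(1,\ g\,u(f),\ v(f)\big)\in S$. Closure under inverses is already recorded above: $(1,g,f)^{-1}=\big(1,\tfrac{1}{g(\bar{f})},\bar{f}\big)$, which lies in $S$ because $g(\bar{f}(0))=g(0)=1$ and $\bar{f}$ has the required form $\bar{f}_0=0$, $\bar{f}_1=1$. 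Together with $I=(1,1,x)\in S$, this makes $S$ a subgroup of $a\mathcal{R}$. Moreover $\psi$ is visibly a bijection: the assignment $(g,f)\mapsto(1,g,f)$ is a two-sided inverse, and the conditions $g_0=1$, $f_0=0$, $f_1=1$ defining membership in $S$ are exactly those defining a proper Riordan array, so $\psi$ is onto $\mathcal{R}$.

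Finally I would confirm the homomorphism property by direct comparison with the classical Riordan product. On one side,
$$\psi\big((1,g,f)\cdot(1,u,v)\big)=\psi\big((1,\ g\,u(f),\ v(f))\big)=(g\,u(f),\ v(f)),$$
while on the other,
$$\psi\big((1,g,f)\big)\cdot\psi\big((1,u,v)\big)=(g,f)\cdot(u,v)=(g\,u(f),\ v(f)),$$
so the two agree; the inverse is handled the same way, $\psi\big((1,g,f)^{-1}\big)=\psi\big((1,\tfrac{1}{g(\bar{f})},\bar{f})\big)=(\tfrac{1}{g(\bar{f})},\bar{f})=(g,f)^{-1}$. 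There is essentially no obstacle here: the only place any earlier machinery is needed is the identity $(1,g,f)\cdot 1=1$, used to see that the product of two elements of $S$ again has first column generated by $1$, and everything else is a routine match with the definition of multiplication in $\mathcal{R}$.
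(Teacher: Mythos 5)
Your proposal is correct and follows essentially the same route as the paper: verify that products and inverses of elements $(1,g,f)$ stay in this subset (using $(a,g,f)\cdot 1=a$) and then match $\psi$ of the product and inverse against the Riordan group operations. The only difference is that you spell out the subgroup and bijectivity checks explicitly, which the paper leaves implicit, so no substantive divergence.
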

Clearly, we have
$$\psi\left( (1, g, f)\cdot (1, u, v)\right)=\psi\left((1, g u(f), v(f))\right)=(gu(f), v(f))= (g, f)\cdot (u, v),$$ and
$$ \psi\left((1, g, f)^{-1}\right)=\psi\left(\left(1, \frac{1}{g\circ \bar{f}}, \bar{f}\right)\right)=\left(\frac{1}{g\circ \bar{f}}, \bar{f}\right)=(g, f)^{-1}=(\psi(1, g, f))^{-1}.$$

Another subgroup of $a\mathcal{R}$ is the group of almost-Riordan arrays of the form
$$(a, 1, x).$$ We have

$$(a, 1, x)\cdot (b, 1,x)=((a,1,x) b, 1, x),$$ showing that these elements are closed under multiplication.
In fact, we have
\begin{eqnarray*} (a,1,x) \cdot (b,1,x)&=& ((a,1,x) b, 1, x)\\
&=& (b_0 a+x \tilde{b}(x),1,x) \\
&=& \left(b_0 a+x \frac{b(x)-1}{x},1,x\right)\\
&=& (a+b-1, 1,x).\end{eqnarray*}
Turning to inverses, we have
$$(a,1,x)^{-1}=(a^*, 1,x),$$ where
$$a^*=(1,-1,x)\cdot a=a_0.1 -x\tilde{a}(x).$$ Thus the first column of
$(a,1,x)^{-1}$ is given by the sequence
$$a_0, -a_1, -a_2, -a_3, \ldots.$$
\begin{proposition} The subset $\mathcal{N}$ of the group of almost-Riordan arrays $a\mathcal{R}$ of matrices of the form
$(a, 1, x)$ is a subgroup, where products are defined by
$$(a, 1, x) \cdot (b, 1, x)= (a+b-1, 1,x), $$ and inverses are defined by
$$(a, 1, x)^{-1} =(a^*, 1,x)=(a_0.1 -x\tilde{a}(x),1,x).$$
\end{proposition}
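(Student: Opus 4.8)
The plan is to verify the three subgroup requirements directly — that $\mathcal{N}$ contains the identity of $a\mathcal{R}$, is closed under the product of $a\mathcal{R}$, and is closed under inversion — drawing on the product and inverse formulas for elements of the form $(a,1,x)$ that were already worked out in the paragraph immediately preceding the statement. Associativity is then inherited from $a\mathcal{R}$, which has already been shown to be a group. As a final step I would identify $\mathcal{N}$ abstractly so that its structure is transparent. I do not expect any genuine obstacle; the one point needing care is purely bookkeeping, namely checking that the first-coordinate series of a product or inverse is still normalised (constant term $1$) so that the result really lies in $\mathcal{N}$.

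\emph{Identity.} The element $I=(1,1,x)$ of $a\mathcal{R}$ is of the form $(a,1,x)$ with $a(x)=1$, hence $I\in\mathcal{N}$; by the earlier proposition identifying $I$ as the identity of $a\mathcal{R}$, it acts as the identity on $\mathcal{N}$ as well. \emph{Closure under products.} For $(a,1,x),(b,1,x)\in\mathcal{N}$, the definition of the $a\mathcal{R}$-product produces an element $\bigl((a,1,x)\cdot b,\,1,\,x\bigr)$, the middle entry being $1\cdot 1=1$ and the last entry being $v(f)=x$ with $v=x$, $f=x$. By the Fundamental Theorem of almost-Riordan arrays with $g=1$ and $f=x$, $(a,1,x)\cdot b = b_0\,a(x) + x\,\tilde b(x) = a(x)+(b(x)-1) = a(x)+b(x)-1$, using $b_0=1$. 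Since $a$ and $b$ both have constant term $1$, so does $a+b-1$, so $(a,1,x)\cdot(b,1,x)=(a+b-1,1,x)\in\mathcal{N}$; this is the stated product rule.

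\emph{Closure under inverses.} Applying the inverse formula for $(a,g,f)$ with $g=1$ and $f=x$, so that $\bar f(x)=x$ and $1/g(\bar f)=1$, gives $(a,1,x)^{-1}=(a^*,1,x)$ with $a^*=(1,-1,x)^{-1}\cdot a(x)$. Since $(1,-1,x)$ is its own inverse (its middle series $-1$ inverts to $-1$ under $f=x$), this is $a^*=(1,-1,x)\cdot a(x)=a_0\cdot 1 - x\,\tilde a(x) = 2-a(x)$, whose constant term is $a_0=1$; hence $(a^*,1,x)\in\mathcal{N}$, matching the stated inverse rule. Associativity is inherited from $a\mathcal{R}$, so $\mathcal{N}$ is a subgroup.

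Finally I would observe that $\theta\colon (a,1,x)\mapsto a(x)-1$ is a bijection from $\mathcal{N}$ onto the additive group of power series in $\mathbb{Z}[[x]]$ with zero constant term, and that $(a+b-1)-1=(a-1)+(b-1)$ together with $a^*-1=-(a-1)$ shows $\theta$ is a group isomorphism; in particular $\mathcal{N}$ is abelian. The whole argument thus reduces to the two one-line Fundamental-Theorem computations above, with the only subtle point — that $a+b-1$ and $a^*$ remain normalised with constant term $1$ — being immediate from $a_0=b_0=1$.
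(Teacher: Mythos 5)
Your proof is correct and follows essentially the same route as the paper, which establishes closure and the inverse formula by exactly these two Fundamental-Theorem computations (namely $(a,1,x)\cdot b=b_0a+x\tilde b(x)=a+b-1$ and $a^*=(1,-1,x)\cdot a=a_0\cdot 1-x\tilde a(x)$) in the passage immediately preceding the proposition. Your closing observation that $(a,1,x)\mapsto a-1$ identifies $\mathcal{N}$ with the additive group of power series with zero constant term (so $\mathcal{N}$ is abelian) is a correct extra remark not made in the paper, but the core argument is the same.
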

\begin{example} The almost-Riordan array $\left(\frac{1}{1-3x}, 1,x\right)$ begins
$$\left(
\begin{array}{ccccccc}
 1 & 0 & 0 & 0 & 0 & 0 & 0 \\
 3 & 1 & 0 & 0 & 0 & 0 & 0 \\
 9 & 0 & 1 & 0 & 0 & 0 & 0 \\
 27 & 0 & 0 & 1 & 0 & 0 & 0 \\
 81 & 0 & 0 & 0 & 1 & 0 & 0 \\
 243 & 0 & 0 & 0 & 0 & 1 & 0 \\
 729 & 0 & 0 & 0 & 0 & 0 & 1 \\
\end{array}
\right).$$ Its inverse begins
$$\left(
\begin{array}{ccccccc}
 1 & 0 & 0 & 0 & 0 & 0 & 0 \\
 -3 & 1 & 0 & 0 & 0 & 0 & 0 \\
 -9 & 0 & 1 & 0 & 0 & 0 & 0 \\
 -27 & 0 & 0 & 1 & 0 & 0 & 0 \\
 -81 & 0 & 0 & 0 & 1 & 0 & 0 \\
 -243 & 0 & 0 & 0 & 0 & 1 & 0 \\
 -729 & 0 & 0 & 0 & 0 & 0 & 1 \\
\end{array}
\right).$$

\end{example}
\begin{proposition} The subgroup $\mathcal{N}$ of $a\mathcal{R}$ of almost-Riordan arrays of the form $(a, 1,x)$ is a normal subgroup of $a\mathcal{R}$.
\end{proposition}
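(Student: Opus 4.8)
The plan is to exhibit $\mathcal{N}$ as the kernel of a natural surjective group homomorphism $a\mathcal{R} \to \mathcal{R}$, since kernels are automatically normal. Define $\pi : a\mathcal{R} \to \mathcal{R}$ by $\pi(a, g, f) = (g, f)$; this is well defined because the ``Riordan part'' $(g,f)$ of an almost-Riordan array is a genuine proper Riordan array. First I would check that $\pi$ is a homomorphism. Using the product rule $(a, g, f)\cdot(b, u, v) = ((a,g,f) b,\, g\,u(f),\, v(f))$, the image of a product is $(g u(f), v(f))$, which is exactly $(g, f)\cdot(u, v)$ in $\mathcal{R}$; the identity $I=(1,1,x)$ maps to $(1, x)$; and the inverse formula $(a, g, f)^{-1} = \left(a^*, \frac{1}{g(\bar{f})}, \bar{f}\right)$ shows $\pi\left((a,g,f)^{-1}\right) = \left(\frac{1}{g(\bar{f})}, \bar{f}\right) = (g,f)^{-1}$. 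Surjectivity is clear from $\pi(1, g, f) = (g, f)$.

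The kernel of $\pi$ is precisely the set of triples $(a, g, f)$ with $(g, f) = (1, x)$, that is, the set of almost-Riordan arrays of the form $(a, 1, x)$, which is $\mathcal{N}$. Hence $\mathcal{N} = \ker \pi$ is a normal subgroup of $a\mathcal{R}$, and moreover $a\mathcal{R}/\mathcal{N} \cong \mathcal{R}$, recovering the statement in the abstract that the cosets of $\mathcal{N}$ correspond to Riordan arrays.

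Alternatively --- and this is the computation I would include if a self-contained verification is preferred --- one can conjugate directly: for $(a, 1, x) \in \mathcal{N}$ and arbitrary $(b, u, v) \in a\mathcal{R}$, compute $(b, u, v)\cdot(a, 1, x) = \left((b,u,v)\cdot a,\, u,\, v\right)$ using $1(v) = 1$ and $x(v) = v$, and then multiply on the right by $(b, u, v)^{-1} = \left(b^*, \frac{1}{u(\bar{v})}, \bar{v}\right)$. The middle and last components of the result are $u \cdot \bigl(\tfrac{1}{u(\bar{v})}\bigr)(v) = u \cdot \tfrac{1}{u(\bar{v}(v))} = \tfrac{u}{u} = 1$ and $\bar{v}(v) = x$, so the conjugate has the form $(c, 1, x)$ for some series $c$; since it is itself an almost-Riordan array it automatically satisfies $c_0 = 1$, and hence it lies in $\mathcal{N}$.

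Neither argument presents a real obstacle: the only points requiring care are the composition identities $\bar{v}(v(x)) = x$ (so that $\tfrac{1}{u(\bar{v})}$ composed with $v$ collapses to $\tfrac{1}{u}$) and the bookkeeping in verifying that $\pi$ respects inverses. The first approach is shorter and also yields the quotient description $a\mathcal{R}/\mathcal{N}\cong\mathcal{R}$ for free, so that is the one I would carry out in full.
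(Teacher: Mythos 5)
Your proposal is correct, but your primary route is genuinely different from the paper's. The paper proves normality by the direct conjugation you relegate to an ``alternative'': it computes $(a,g,f)\cdot(b,1,x)\cdot(a,g,f)^{-1}$ using the product and inverse formulas, observes that the middle and last components collapse to $g\cdot\frac{1}{g(\bar{f}(f))}=1$ and $\bar{f}(f)=x$, and then (going beyond what normality requires) simplifies the first component to obtain the explicit form $(1+xg\tilde{b}(f),\,1,\,x)$ of the conjugate --- information your kernel argument does not produce. Your main argument instead realises $\mathcal{N}$ as $\ker\pi$ for the projection $\pi(a,g,f)=(g,f)$, which is cleaner: multiplicativity of $\pi$ is immediate from the product rule $(a,g,f)\cdot(b,u,v)=((a,g,f)b,\,gu(f),\,v(f))$, the kernel is visibly the set of triples $(a,1,x)$, and surjectivity gives $a\mathcal{R}/\mathcal{N}\cong\mathcal{R}$ at once, a fact the paper only records afterwards via a separate coset computation $(1,g,f)^{-1}\cdot(a,g,f)=(a,1,x)$. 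Two small remarks: once $\pi$ is known to be multiplicative between groups, your checks of the identity and of inverses are redundant; and your appeal to ``it is itself an almost-Riordan array, so $c_0=1$'' leans on the paper's closure statement that the product of almost-Riordan arrays is again almost-Riordan, which is legitimate but worth citing explicitly. Neither point is a gap; both of your arguments are sound.
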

\begin{proof}
We must show that for an arbitrary element $(a, g, f)$ of $a\mathcal{R}$, the element
$$ (a, g, f)\cdot (b, 1,x)\cdot (a, g, f)^{-1}$$ is of the form $(b', 1,x)$ for an appropriate power series $b'$.
We have
\begin{eqnarray*}
(a, g, f)\cdot (b, 1,x)\cdot (a, g, f)^{-1}&=&
((a,g,f)\cdot b, g, f)\cdot \left(a^*, \frac{1}{g(\bar{f})}, \bar{f}\right)\\
&=&\left(((a,g,f)\cdot b, g, f)\cdot a^*, g. \frac{1}{g(\bar{f}(f))}, \bar{f}(f)\right)\\
&=& (((a,g,f) \cdot b, g, f)\cdot a^*, 1, x)
\end{eqnarray*}
as required.
\end{proof}
It is instructive to continue the above calculation. Thus we have

$$(a,g,f) \cdot (b,1,x) \cdot (a, g, f)^{-1}=(((a,g,f) \cdot b, g, f)\cdot a^*, 1, x).$$
We simplify the first element of the latter matrix.
\begin{eqnarray*}
((a,g,f) \cdot b, g, f)\cdot a^*&=&((a,g,f)\cdot b, g,f)\cdot \left(1,-\frac{1}{g(\bar{f})}, \bar{f}\right)\cdot a(x)\\
&=& \left(((a,g,f)\cdot b, g,f)\cdot 1, - g.\frac{1}{g(\bar{f}(f))}, \bar{f}(f)\right)\cdot a(x)\\
&=& ((a,g, f) \cdot b, -1, x)\cdot a(x)\\
&=& a_0 (a, g,f) \cdot b-x.1.\tilde{a}(x)\\
&=& (a,g,f)\cdot b -x \tilde{a}(x)\\
&=& b_0 a+xg \tilde{b}(f)-x \tilde{a}(x)\\
&=& a + x g \tilde{b}(f)-x \frac{a(x)-1}{x}\\
&=& a+xg \tilde{b}(f)-a(x)+1\\
&=& 1+xg \tilde{b}(f).\end{eqnarray*}
Finally, we have
$$(a,g,f) \cdot (b,1,x) \cdot (a, g, f)^{-1}=(1+xg \tilde{b}(f), 1, x).$$

We have the following canonical factorization.

\begin{equation}(a, g, f)=(a,1,x) \cdot (1, g, f).\end{equation}

This follows since
$$(a, 1, x)\cdot (1, g, f)=((a,1,x)\cdot 1, g, f)=(a, g, f).$$

Now let $(a, g, f) \in a\mathcal{R}$. We have
$$(a, g, f)\mathcal{N} = (1, g, f)\mathcal{N} \Leftrightarrow (1,g,f)^{-1}\cdot (a, g,f) \in \mathcal{N}.$$
Now
\begin{eqnarray*}
(1, g, f)^{-1} \cdot (a,g,f)&=& \left(1, \frac{1}{g \circ \bar{f}}, \bar{f}\right)\cdot (a, g, f)\\
&=& \left((a, g, f).1, \frac{1}{g \circ \bar{f}}.g(\bar{f}), f(\bar{f})\right)\\
&=& (a, 1, x) \in \mathcal{N}.\end{eqnarray*}
Hence modulo $\mathcal{N}$, we have
$$ (a, g,f) \backsim (1, g, f).$$
It is clear that we have a $1-1$ correspondence between almost-Riordan arrays of the form $(1, g, f)$ and Riordan arrays $(g,f)$.
Hence $$ a\mathcal{R}/\mathcal{N} = \mathcal{R}.$$

\begin{proposition} Let $a\mathcal{R}$ be the group of almost-Riordan arrays, $\mathcal{R}$ be the group of Riordan arrays, and $\mathcal{N}$ be the normal subgroup of $a\mathcal{R}$ consisting of arrays of the form $(a, 1, x)$ where $a_0=1$. Then
$$a\mathcal{R}/\mathcal{N} = \mathcal{R}.$$
\end{proposition}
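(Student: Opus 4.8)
The plan is to exhibit an explicit surjective group homomorphism $\pi\colon a\mathcal{R}\to\mathcal{R}$ whose kernel is exactly $\mathcal{N}$, and then invoke the first isomorphism theorem. The natural candidate is $\pi(a,g,f)=(g,f)$, i.e.\ forgetting the first component. First I would check that $\pi$ respects products: from the definition $(a,g,f)\cdot(b,u,v)=((a,g,f)b,\, gu(f),\, v(f))$, the last two components are precisely those of the Riordan product $(g,f)\cdot(u,v)=(gu(f),v(f))$, so $\pi$ is multiplicative. Since $\pi(I)=\pi(1,1,x)=(1,x)$ is the identity of $\mathcal{R}$ and $\mathcal{R}$ is a group, $\pi$ automatically respects inverses; alternatively one reads this off the formula $(a,g,f)^{-1}=(a^*,\tfrac{1}{g(\bar{f})},\bar{f})$, whose last two components form $(g,f)^{-1}$. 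Surjectivity is immediate, since $\pi(1,g,f)=(g,f)$ realises every Riordan array.

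Next I would identify the kernel. One has $\pi(a,g,f)=(1,x)$ if and only if $g=1$ and $f=x$, that is, if and only if $(a,g,f)=(a,1,x)$ with $a_0=1$, which is precisely the description of $\mathcal{N}$. Since $\mathcal{N}$ has already been shown to be a normal subgroup of $a\mathcal{R}$ (and in any case this is automatic from $\mathcal{N}=\ker\pi$), the first isomorphism theorem yields $a\mathcal{R}/\mathcal{N}\cong\mathcal{R}$. A more hands-on route, avoiding the isomorphism theorem, uses the two facts already established in the text: the canonical factorization $(a,g,f)=(a,1,x)\cdot(1,g,f)$ and the computation $(1,g,f)^{-1}\cdot(a,g,f)=(a,1,x)\in\mathcal{N}$. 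The latter shows $(a,g,f)\mathcal{N}=(1,g,f)\mathcal{N}$, so every coset has a representative of the form $(1,g,f)$; uniqueness follows because $(1,g',f')^{-1}(1,g,f)\in\mathcal{N}$ forces its Riordan part $(g',f')^{-1}(g,f)$ to equal $(1,x)$, whence $(g,f)=(g',f')$. Thus $(a,g,f)\mathcal{N}\mapsto(g,f)$ is a well-defined bijection onto $\mathcal{R}$, and the isomorphism $\psi\colon(1,g,f)\mapsto(g,f)$ shows that coset multiplication, computed on the representatives $(1,g,f)$, agrees with Riordan multiplication.

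The main obstacle here is bookkeeping rather than genuine difficulty: one must be careful that the ``$=$'' in $a\mathcal{R}/\mathcal{N}=\mathcal{R}$ is to be read as the canonical identification $(a,g,f)\mathcal{N}\leftrightarrow(g,f)$, and that the group operation induced on cosets from $a\mathcal{R}$ coincides with the product in $\mathcal{R}$. This is exactly where the multiplicativity of $\pi$ (equivalently, the homomorphism property of $\psi$ together with the choice of representatives $(1,g,f)$) is used, and it is the only step in which the specific shape of the almost-Riordan product $(a,g,f)\cdot(b,u,v)=((a,g,f)b,gu(f),v(f))$ actually enters.
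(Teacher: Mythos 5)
Your proof is correct, and in fact it contains two arguments. Your ``hands-on'' route is essentially the paper's own: the paper proves the proposition by combining the canonical factorization $(a,g,f)=(a,1,x)\cdot(1,g,f)$ with the computation $(1,g,f)^{-1}\cdot(a,g,f)=(a,1,x)\in\mathcal{N}$, so that every coset of $\mathcal{N}$ has a representative $(1,g,f)$, and then invokes the previously established isomorphism $\psi\colon(1,g,f)\mapsto(g,f)$; your added remark on uniqueness of the representative (forcing $(g',f')^{-1}(g,f)=(1,x)$) only makes explicit what the paper leaves as ``clear.'' Your primary route is packaged differently: you define the projection $\pi(a,g,f)=(g,f)$ on all of $a\mathcal{R}$, read off multiplicativity directly from the product formula $(a,g,f)\cdot(b,u,v)=((a,g,f)b,\,gu(f),\,v(f))$, identify $\ker\pi=\mathcal{N}$, and apply the first isomorphism theorem. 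The paper never introduces $\pi$ or the isomorphism theorem. Your version is cleaner in that it gives normality of $\mathcal{N}$ for free and sidesteps any well-definedness check for coset multiplication, while the paper's coset-representative argument is more explicit about how each class $(a,g,f)\mathcal{N}$ is normalized to the form $(1,g,f)$, which it then reuses in later computations. Either way the conclusion $a\mathcal{R}/\mathcal{N}\cong\mathcal{R}$ (the ``$=$'' being the canonical identification) is fully justified.
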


\begin{proposition} $\mathcal{R}$ is not a normal subgroup of $a\mathcal{R}$.
\end{proposition}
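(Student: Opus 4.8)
The plan is to produce a single genuine almost-Riordan array $M=(a,g,f)$ (one that is \emph{not} a Riordan array) together with a single Riordan array whose $M$-conjugate is no longer a Riordan array; since normality would require $M\mathcal{R}M^{-1}=\mathcal{R}$ for \emph{every} $M$, this suffices. The Riordan array I would conjugate is one of the simplest kind, namely $(p,x)$ with $p\neq 1$, which inside $a\mathcal{R}$ is the triple $\left(p,p\frac{x}{x},x\right)=(p,p,x)$; choosing its $f$-component to be $x$ makes almost every composition appearing in the product formula collapse.

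First I would compute $M\cdot(p,p,x)$ using the product rule and the Fundamental Theorem of almost-Riordan arrays, obtaining $\left(a+xg\,\tilde p(f),\,g\,p(f),\,f\right)$. Next I would right-multiply by $M^{-1}=\left(a^{*},\frac{1}{g(\bar f)},\bar f\right)$. Because the middle triple has $f$-component $f$, composing with $\bar f$ sends the third component to $\bar f(f)=x$ and the middle component to $g\,p(f)\cdot\frac{1}{g(\bar f(f))}=p(f)$; these are immediate. The one step requiring care is the new first column, $\left(a+xg\,\tilde p(f),\,g\,p(f),\,f\right)\cdot a^{*}$: writing $a^{*}=\left(1,-\frac{1}{g(\bar f)},\bar f\right)\cdot a$ as in the proof of the inverse formula, the triple $\left(a+xg\,\tilde p(f),\,g\,p(f),\,f\right)\cdot\left(1,-\frac{1}{g(\bar f)},\bar f\right)$ simplifies to $\left(a+xg\,\tilde p(f),\,-p(f),\,x\right)$, and applying this to $a(x)$ while using $x\tilde a(x)=a(x)-1$ and $\tilde p(f)=\frac{p(f)-1}{f}$ collapses the first column to
$$\left(a-\frac{xg}{f}\right)\bigl(1-p(f)\bigr)+p(f).$$

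Hence $M\cdot(p,p,x)\cdot M^{-1}=\left(\left(a-\frac{xg}{f}\right)(1-p(f))+p(f),\,p(f),\,x\right)$, and this triple is a Riordan array — i.e. of the form $\left(h,h\frac{x}{x},x\right)=(h,h,x)$ — exactly when its first and second components agree, that is when $\left(a-\frac{xg}{f}\right)(1-p(f))=0$. Since $\mathbb{Z}[[x]]$ is an integral domain this forces $g=\frac{af}{x}$ (which is precisely the condition for $M$ itself to be a Riordan array) or $p=1$. Therefore any genuine almost-Riordan array $M$ together with any $p\neq 1$ gives a counterexample; concretely, taking $M=\left(\frac{1}{1-2x},\frac{1}{1-x},\frac{x}{1-x}\right)$ (the array of the first example) and the Riordan array $\left(\frac{1}{1-x},x\right)$ yields the conjugate $\left(\frac{1-3x}{(1-2x)^2},\frac{1-x}{1-2x},x\right)$, which is not of Riordan form and hence not in $\mathcal{R}$.

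The only real obstacle is the algebraic simplification of the first column described above; every other ingredient is a formal collapse forced by the choice of $f$-component equal to $x$. I would also record in passing that the same computation applied to the other copy $\{(1,g,f)\}$ of $\mathcal{R}$ inside $a\mathcal{R}$ (conjugating $(1,p,x)$) produces the obstruction $(a-1)(1-p(f))$, so that copy fails to be normal as well.
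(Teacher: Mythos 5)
Your proof is correct, and it takes a genuinely different (and in one respect tighter) route than the paper's. The paper conjugates a completely general element $\left(u,u\frac{v}{x},v\right)$ of the embedded copy of $\mathcal{R}$ by an arbitrary $(a,g,f)$, grinds out the resulting triple in full generality, and then simply points at the unwieldy first component as ``the obstruction to achieving normality''; it never exhibits a specific choice for which that obstruction actually destroys the Riordan form, so its argument is computation-plus-assertion rather than a finished counterexample. You instead specialize to $v=x$, i.e.\ to the element $(p,p,x)$, which collapses every composition with $\bar{f}$ and yields the closed form $M\cdot(p,p,x)\cdot M^{-1}=\left(\left(a-\frac{xg}{f}\right)(1-p(f))+p(f),\,p(f),\,x\right)$; I checked this computation, your membership criterion (first component must equal the second, forced because the third component is $x$), and the integral-domain step, which correctly shows the conjugate lies in $\mathcal{R}$ only when $g=\frac{af}{x}$ (i.e.\ $M$ itself is in the Riordan subgroup) or $p=1$. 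Your concrete instance $M=\left(\frac{1}{1-2x},\frac{1}{1-x},\frac{x}{1-x}\right)$ with $p=\frac{1}{1-x}$, giving the conjugate $\left(\frac{1-3x}{(1-2x)^2},\frac{1-x}{1-2x},x\right)$, is also right: $\frac{1-3x}{(1-2x)^2}=1+x+0x^2+\cdots$ while $\frac{1-x}{1-2x}=1+x+2x^2+\cdots$, so the triple is not of the form $\left(h,h\frac{x}{x},x\right)$. What the paper's approach buys is the fully general formula for an arbitrary conjugated Riordan element; what yours buys is a clean characterization of exactly which of these conjugates remain in $\mathcal{R}$ plus an explicit, checkable counterexample, which makes your write-up the more complete proof of the proposition. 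Your closing remark that conjugating $(1,p,x)$ produces the obstruction $(a-1)(1-p(f))$ for the other copy $\{(1,g,f)\}$ of $\mathcal{R}$ is likewise correct.
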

\begin{proof}
We consider an element $\left(u, u\frac{v}{x}, v\right)$ of the subgroup $\mathcal{R}$. If $\mathcal{R}$ were a normal subgroup, then  for an arbitrary element $(a, g, f) \in a\mathcal{R}$, we would have
$$ (a, g, f)\cdot \left(u, u\frac{v}{x}, v\right) \cdot (a, g, f)^{-1}=\left(U, U \frac{V}{x}, V\right),$$ for appropriate power series $U(x)$ and $V(x)$.
Now we have
\begin{eqnarray*}(a, g, f)\cdot \left(u, u\frac{v}{x}, v\right)&=& \left((a,g,f)u, g u(f)\frac{v(f)}{f}, v(f)\right)\\
&=&\left(u_0 a+xg\tilde{u}(f),g u(f)\frac{v(f)}{f}, v(f)\right).\end{eqnarray*}
Hence
\begin{eqnarray*}
(a, g, f)\cdot \left(u, u\frac{v}{x}, v\right) \cdot (a, g, f)^{-1}&=&
\left(u_0 a+xg\tilde{u}(f),g u(f)\frac{v(f)}{f}, v(f)\right)\cdot (a, g, f)^{-1}\\
&=&\left(u_0 a+xg\tilde{u}(f),g u(f)\frac{v(f)}{f}, v(f)\right)\cdot \left(a^*,\frac{1}{g(\bar{f})},\bar{f}\right).
\end{eqnarray*}
This last expression is equal to
$$\left(\left(u_0 a+xg\tilde{u}(f),g u(f)\frac{v(f)}{f}, v(f)\right)\cdot a^*,gu(f)\frac{v(f)}{f} \frac{1}{g\circ \bar{f}(v(f))}, \bar{f}(v(f))\right).$$
We must therefore simplify $$\left(u_0 a+xg\tilde{u}(f),g u(f)\frac{v(f)}{f}, v(f)\right)\cdot a^*,$$ where
$$a^*=\left(1, -\frac{1}{g(\bar{f})}, \bar{f}\right)\cdot a.$$
Now
\begin{eqnarray*}
\left(u_0 a+xg\tilde{u}(f),g u(f)\frac{v(f)}{f}, v(f)\right)\cdot a^*&=&\left(u_0 a+xg\tilde{u}(f),g u(f)\frac{v(f)}{f}, v(f)\right) \cdot \left(1, -\frac{1}{g(\bar{f})}, \bar{f}\right)\cdot a\\
&=& \left(u_0 a+xg\tilde{u}(f), -gu(f)\frac{v(f)}{f} \frac{1}{g\circ \bar{f}(v(f))}, \bar{f}(v(f))\right)\cdot a\\
&=& a_0(u_0 a+xg\tilde{u}(f))-xgu(f)\frac{v(f)}{f} \frac{1}{g\circ \bar{f}(v(f))} \tilde{a}(\bar{f}(v(f)))\\
&=& a+xg\tilde{u}(f)-xgu(f)\frac{v(f)}{f} \frac{1}{g\circ \bar{f}(v(f))} \tilde{a}(\bar{f}(v(f))).
\end{eqnarray*}
We have thus arrived at
$$(a, g, f)\cdot \left(u, u\frac{v}{x}, v\right) \cdot (a, g, f)^{-1}=$$
$$\left(a+xg\tilde{u}(f)-xgu(f)\frac{v(f)}{f} \frac{1}{g\circ \bar{f}(v(f))} \tilde{a}(\bar{f}(v(f))),gu(f)\frac{v(f)}{f} \frac{1}{g\circ \bar{f}(v(f))}, \bar{f}(v(f))\right)=$$
$$\left(a+xg\tilde{u}(f)-xgu(f)\frac{v(f)}{f} \frac{1}{g\circ \bar{f}(v(f))} \frac{a(\bar{f}(v(f)))-1}{\bar{f}(v(f))},gu(f)\frac{v(f)}{f} \frac{1}{g\circ \bar{f}(v(f))}, \bar{f}(v(f))\right).$$
The first element expands to give
$$a+xg\tilde{u}(f)-xgu(f)\frac{v(f)}{f} \frac{1}{g\circ \bar{f}(v(f))} \frac{a(\bar{f}(v(f)))-1}{\bar{f}(v(f))}=$$
$$a+xg\tilde{u}(f)-xgu(f)\frac{v(f)}{f} \frac{1}{g\circ \bar{f}(v(f))}\frac{a(\bar{f}(v(f)))}{\bar{f}(v(f))}+xgu(f)\frac{v(f)}{f} \frac{1}{g\circ \bar{f}(v(f))}\frac{1}{\bar{f}(v(f))}.$$

Thus the obstruction to achieving normality is the expression
$$a+xg\tilde{u}(f)-xgu(f)\frac{v(f)}{f} \frac{1}{g\circ \bar{f}(v(f))}\frac{a(\bar{f}(v(f)))}{\bar{f}(v(f))}.$$
\end{proof}

The production matrix of $(a, g, f)$ is as follows.
\begin{proposition} The production matrix of the almost-Riordan array $(a, g, f)$ is given as follows. Its first column is $(a, g, f)^{-1}\cdot \tilde{a}(x)$. Its second column is given by
$(a, g, f)^{-1} \cdot g(x)$. Subsequent columns coincide with the $A$-sequence of the Riordan array $(g, f)$.
\end{proposition}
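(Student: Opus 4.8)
The plan is to compute the production matrix straight from its definition. Writing $M=(a,g,f)$, its production matrix is $P=M^{-1}\,\overline{M}$, where $\overline{M}=\bar I M$ is $M$ with its top row deleted (so $(\overline{M})_{n,k}=M_{n+1,k}$), and $M$ is invertible because it is lower unitriangular. The $k$-th column of $P$ is then simply $M^{-1}$ applied to the $k$-th column of $\overline{M}$, so the whole statement reduces to identifying the columns of $\overline{M}$ and then applying the inverse map.

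First I would read off the columns of $\overline{M}$ from the explicit entries $M_{n,0}=a_n$, $M_{n,k}=[x^{n-1}]gf^{k-1}$ for $n,k\ge 1$, and $M_{0,k}=0^k$. Shifting up by one row, column $0$ of $\overline{M}$ is generated by $\tilde a(x)=\tfrac{a(x)-1}{x}$, column $1$ is generated by $g(x)$, and for $k\ge 2$ column $k$ is generated by $g(x)f(x)^{k-1}$. The first two assertions of the proposition are then immediate: column $0$ of $P$ is $(a,g,f)^{-1}\cdot\tilde a(x)$ and column $1$ of $P$ is $(a,g,f)^{-1}\cdot g(x)$.

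For the columns of index $k\ge 2$ I would use the Fundamental Theorem of almost-Riordan arrays together with the inverse formula $(a,g,f)^{-1}=\left(a^*,\tfrac{1}{g(\bar f)},\bar f\right)$. The key point is that the series $h(x)=g(x)f(x)^{k-1}$ has $h_0=0$ for $k\ge 2$ (because $f(0)=0$), so the $a^*$ term disappears from $(a,g,f)^{-1}\cdot h=h_0 a^*+x\,\tfrac{1}{g(\bar f)}\,\tilde h(\bar f)$; substituting $\tilde h=h/x$ and $f(\bar f(x))=x$ collapses this to $x\,\tfrac{1}{g(\bar f)}\cdot\tfrac{g(\bar f)\,x^{k-1}}{\bar f}=\tfrac{x^{k}}{\bar f(x)}=x^{k-1}A(x)$, where $A(x)=\tfrac{x}{\bar f(x)}$ is the generating function of the $A$-sequence of $(g,f)$ by \eqref{AZ_eq}. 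Hence for $k\ge 2$ the $k$-th column of $P$ is the $A$-sequence of $(g,f)$ shifted down $k-1$ rows; equivalently, from the second column onwards $P$ has precisely the staircase of the $A$-sequence of $(g,f)$, exactly as in Proposition~\ref{RProdMat}.

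The part needing the most care is the single-row offset that distinguishes the almost-Riordan array from the embedded Riordan array $(g,f)$: one has to check that the boundary between \emph{the first two columns} and \emph{all subsequent columns} is exactly the boundary between $h_0\ne 0$ and $h_0=0$ in the Fundamental Theorem (the case $k=1$ gives $h=g$ with $h_0=1$, while every $k\ge 2$ gives $h_0=0$). Once the bookkeeping of this shift is pinned down, the remaining manipulations are routine series algebra.
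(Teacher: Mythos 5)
Your proposal is correct and follows essentially the same route as the paper: both compute $P=(a,g,f)^{-1}\cdot\overline{(a,g,f)}$ from the definition and read off that the columns of $\overline{(a,g,f)}$ are generated by $\tilde{a}(x)$, $g(x)$, and $g(x)f(x)^{k-1}$ for $k\ge 2$. The only difference is one of detail: where the paper handles the columns $k\ge 2$ by matrix partitioning (reducing to the known production-matrix structure of the embedded Riordan array $(g,f)$), you verify them explicitly by computing $(a,g,f)^{-1}\cdot g f^{k-1}=x^{k-1}A(x)$ with $A(x)=x/\bar{f}(x)$ via the Fundamental Theorem and the inverse formula, which is a harmless and if anything more complete variant of the same argument.
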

\begin{proof} This follows immediately from the definition of the production matrix
$$(a, g, f)^{-1} \cdot \overline{(a, g, f)}.$$
The matrix $\overline{(a, g, f)}$ has a first column generated by $\tilde{a}$, alongside the Riordan array
$(g, f)$. Thus the production matrix consists of the result of applying
$(a, g, f)^{-1}$ to $\tilde{a}$, alongside the result of applying the inverse $(g, f)^{-1}$ to $(g, f)$ without its first row (since in $(a, g, f)$, the component $(g, f)$ starts in the column that is one column in from the left).
\end{proof}

We call the first ($0$-th) column of the production matrix of $(a, g, f)$ the $\omega$ sequence, the second column the $Z$-sequence and the third column the $A$-sequence (with respective entries $\omega_n$, $Z_n$ and $A_n$).
Then we have the following.
$$T_{n,0}=\sum_{j=0}^n T_{n-1,j} \omega_k.$$
$$T_{n,1}=\sum_{j=0}^n T_{n-1,j} Z_j.$$
$$T_{n,k}=\sum_{j=k-1}^n T_{n-1, j} A_j, \quad k > 1.$$

We have
\begin{eqnarray*}
\omega(x)&=&(a, g, f)^{-1}\cdot \tilde{a}(x)\\
&=& \left(a^*, \frac{1}{g(\bar{f})}, \bar{f}\right)\cdot \tilde{a}(x) \\
&=& \tilde{a}_0 a^*+x\frac{1}{g(\bar{f})}\tilde{a}(\bar{f})\\
&=& a_1 a^*+x\frac{1}{g(\bar{f})}\tilde{a}(\bar{f})\\
&=& a_1 (a_0-x \frac{1}{g(\bar{f})} a(\bar{f}))+x\frac{1}{g(\bar{f})}\tilde{a}(\bar{f})\\
&=& a_1 + \frac{x}{g(\bar{f})}\left(\tilde{a}(\bar{f})-a_1 a(\bar{f})\right).
\end{eqnarray*}
\section{Iterating the process}
A natural question that arises is whether the process of adding a new column on the left can be iterated to assemble a hierarchy of higher groups? The following considerations indicate that this is indeed possible.

We define a set of matrices $\mathcal{R}^{(2)}$ as follows. Its elements are $4$-tuples of power series $(a, b, g, f)$ where $a$, $b$, $g$ and $f$ satisfy $a_0=1$, $b_0=1$, $g_0=1$ and $f_0=0, f_1=1$. We define a product of such elements by
\begin{equation}(a, b, g, f) \cdot (h, k, u, v)=((a, b, g, f)\cdot h, (b, g, f)\cdot k, g u(f), v(f)).\end{equation}
Here, the term $(b, g, f) \cdot k$ is to be taken in the sense of $a\mathcal{R}=\mathcal{R}^{(1)}$, while
the term $g u(f)=(g, u)\cdot f$ in the sense of $\mathcal{R}=\mathcal{R}^{(0)}$.
Thus
$$(a,b,g,f) \cdot (h, k,u, v)=((a, b,g,f,)\cdot h, (b,g,f)\cdot k, (g, u)\cdot f, v(f)).$$ It remains to say what is  $$(a, b, g, f) \cdot h$$. We define
\begin{equation}{\label{Eq}}(a, b, g, f) \cdot h = h_0 a + h_1 xb + x^2 g \tilde{\tilde{h}}(f),\end{equation} where
$$\tilde{\tilde{h}}=\frac{h(x)-h_0-h_1 x}{x^2}.$$
The $4$-tuple $(a, b, g, f)$ is identified with the following lower-triangular matrix: its first column (the $0$-th column) is given by the expansion of $a(x)$; the second column begins with a $0$, and from the $(1,1)$-position downwards coincides with the expansion of $b(x)$ (that is, the second column coincides with the expansion of $xb(x)$). Starting from the $(2,2)$ position, the matrix coincides with the Riordan array $(g,f)$. Other elements are zero. Matrix multiplication of a vector (when that vector's elements coincide with the expansion of a generating function $h(x)$) then corresponds to the rule given by Equation (\ref{Eq}). This can then be called the Fundamental Theorem for $\mathcal{R}^{(2)}$.

We can define the inverse of a $4$-tuple as follows.
\begin{equation}(a, b, g, f)^{-1}=\left(a^{**}, b^*, \frac{1}{g \circ \bar{f}}, \bar{f}\right),\end{equation}
where \begin{equation}b^*=(1, - g, f)^{-1}\cdot b = \left(1, -\frac{1}{g \circ \bar{f}}, \bar{f}\right)\cdot b,\end{equation}
and where
\begin{equation}a^{**} = (1, -b, -g, f)^{-1} \cdot a=\left(1, -b^*, -\frac{1}{g \circ \bar{f}}, \bar{f}\right) \cdot a.\end{equation}

\begin{example} We consider the element
$$(a,b, g, f)=\left(\frac{1-2}{1-3x}, \frac{1-x}{1-2}, \frac{1}{1-x}, \frac{x}{1-x}\right) \in \mathcal{R}^{(2)}.$$
We have
\begin{eqnarray*}b^*&=&\left(1, -\frac{1}{1-x}, \frac{x}{1-x}\right)^{-1}\cdot \frac{1-x}{1-2x}\\
&=& \left(1, -\frac{1}{1+x}, \frac{x}{1+x}\right)\cdot \frac{1-x}{1-2x}\\
&=& \frac{1-2x}{1-x}.\end{eqnarray*}
Then
\begin{eqnarray*}
a^{**}&=& (1, -b,-g, f)^{-1} \cdot a\\
&=& \left(1, -\frac{1-x}{1-2x}, -\frac{1}{1-x}, \frac{x}{1-x}\right)^{-1}\cdot a\\
&=& \left(1, -\frac{1-2x}{1-x}, -\frac{1}{1+x}, \frac{x}{1+x}\right) \cdot \frac{1-2x}{1-3x}\\
&=& 1 - x \frac{1-2x}{1-x}-x^2 \frac{1}{1+x} \widetilde{\widetilde{\left(\frac{1-2x}{1-3x}\right)}}\left(\frac{x}{1+x}\right)\\
&=& \frac{1-4x+3x^2-x^3}{(1-x)(1-2x)}.\end{eqnarray*}

Here, we have $$\widetilde{\widetilde{\left(\frac{1-2x}{1-3x}\right)}}=\frac{3}{1-3x}$$ and hence
$$\widetilde{\widetilde{\left(\frac{1-2x}{1-3x}\right)}}\left(\frac{x}{1+x}\right)=\frac{3}{1-3\frac{x}{1+x}}=\frac{3(1+x)}{1-2x}.$$
Thus we have
$$\left(\frac{1-2}{1-3x}, \frac{1-x}{1-2}, \frac{1}{1-x}, \frac{x}{1-x}\right)^{-1}=\left(\frac{1-4x+3x^2-x^3}{(1-x)(1-2x)}, \frac{1-2x}{1-x}, \frac{1}{1+x}, \frac{x}{1+x}\right).$$
\end{example}
\begin{example} There are many ways of constructing elements of $\mathcal{R}^{(2)}$. For instance, we can start with a Riordan array and pre-pend two columns appropriately. Alternatively, we could start with a Riordan array and multiply it by an element of the form $(a,b,1,x)$. The following shows another method of defining an element of $\mathcal{R}^{(2)}$, starting with an element of $\mathcal{R}^{(0)}=\mathcal{R}$.

We take the matrix $$\left(\frac{1}{1+x}, \frac{x}{(1+x)^2}\right)^{-1}=(c(x), c(x)-1), $$ which begins
$$\left(
\begin{array}{ccccccc}
 1 & 0 & 0 & 0 & 0 & 0 & 0 \\
 1 & 1 & 0 & 0 & 0 & 0 & 0 \\
 2 & 3 & 1 & 0 & 0 & 0 & 0 \\
 5 & 9 & 5 & 1 & 0 & 0 & 0 \\
 14 & 28 & 20 & 7 & 1 & 0 & 0 \\
 42 & 90 & 75 & 35 & 9 & 1 & 0 \\
 132 & 297 & 275 & 154 & 54 & 11 & 1 \\
\end{array}
\right).$$
We then form the product
$$\left(
\begin{array}{ccccccc}
 1 & 0 & 0 & 0 & 0 & 0 & 0 \\
 1 & 1 & 0 & 0 & 0 & 0 & 0 \\
 2 & 3 & 1 & 0 & 0 & 0 & 0 \\
 5 & 9 & 5 & 1 & 0 & 0 & 0 \\
 14 & 28 & 20 & 7 & 1 & 0 & 0 \\
 42 & 90 & 75 & 35 & 9 & 1 & 0 \\
 132 & 297 & 275 & 154 & 54 & 11 & 1 \\
\end{array}
\right)\cdot \left(
\begin{array}{ccccccccc}
 1 & 1 & 1 & 0 & 0 & 0 & 0 & 0 & 0 \\
 0 & 1 & 1 & 1 & 0 & 0 & 0 & 0 & 0 \\
 0 & 0 & 1 & 1 & 1 & 0 & 0 & 0 & 0 \\
 0 & 0 & 0 & 1 & 1 & 1 & 0 & 0 & 0 \\
 0 & 0 & 0 & 0 & 1 & 1 & 1 & 0 & 0 \\
 0 & 0 & 0 & 0 & 0 & 1 & 1 & 1 & 0 \\
 0 & 0 & 0 & 0 & 0 & 0 & 1 & 1 & 1 \\
\end{array}
\right),$$ to obtain the matrix that begins
$$\left(
\begin{array}{ccccccccc}
 1 & 1 & 1 & 0 & 0 & 0 & 0 & 0 & 0 \\
 1 & 2 & 2 & 1 & 0 & 0 & 0 & 0 & 0 \\
 2 & 5 & 6 & 4 & 1 & 0 & 0 & 0 & 0 \\
 5 & 14 & 19 & 15 & 6 & 1 & 0 & 0 & 0 \\
 14 & 42 & 62 & 55 & 28 & 8 & 1 & 0 & 0 \\
 42 & 132 & 207 & 200 & 119 & 45 & 10 & 1 & 0
   \\
 132 & 429 & 704 & 726 & 483 & 219 & 66 & 12 &
   1 \\
\end{array}
\right).$$
We now complete this matrix to be lower-triangular as follows
$$M=\left(
\begin{array}{ccccccccc}
 1 & 0 & 0 & 0 & 0 & 0 & 0 & 0 & 0 \\
 1 & 1 & 0 & 0 & 0 & 0 & 0 & 0 & 0 \\
 1 & 1 & 1 & 0 & 0 & 0 & 0 & 0 & 0 \\
 1 & 2 & 2 & 1 & 0 & 0 & 0 & 0 & 0 \\
 2 & 5 & 6 & 4 & 1 & 0 & 0 & 0 & 0 \\
 5 & 14 & 19 & 15 & 6 & 1 & 0 & 0 & 0 \\
 14 & 42 & 62 & 55 & 28 & 8 & 1 & 0 & 0 \\
 42 & 132 & 207 & 200 & 119 & 45 & 10 & 1 & 0
   \\
 132 & 429 & 704 & 726 & 483 & 219 & 66 & 12 &
   1 \\
\end{array}
\right).$$
The production matrix of this array then begins
$$\left(
\begin{array}{cccccccc}
 1 & 1 & 0 & 0 & 0 & 0 & 0 & 0 \\
 0 & 0 & 1 & 0 & 0 & 0 & 0 & 0 \\
 0 & 1 & 1 & 1 & 0 & 0 & 0 & 0 \\
 1 & 2 & 2 & 2 & 1 & 0 & 0 & 0 \\
 -1 & -2 & 0 & 1 & 2 & 1 & 0 & 0 \\
 0 & 0 & -1 & 0 & 1 & 2 & 1 & 0 \\
 1 & 2 & 1 & 0 & 0 & 1 & 2 & 1 \\
 -1 & -2 & 0 & 0 & 0 & 0 & 1 & 2 \\
\end{array}
\right),$$ indicating that the matrix $M$ is an element of $\mathcal{R}^{(2)}$.

We note for instance that the transform $b_n$ of the Fibonacci numbers $F_n$ by this matrix has a Hankel transform
with generating function $$\frac{-x(1-10x+24x^2+64x^3)}{(1-4x)^4},$$ while the Hankel transform of $b_{n+1}$ has generating function $$\frac{1-6x}{(1-4x)^2}.$$
\end{example}

We  have the following proposition.
\begin{proposition} The set of $4$-tuples $\mathcal{R}^{(2)}$ defined above is a group, with identity
$I=(1,1,1,x)$. The subset $\mathcal{N}^{(2)}$  of $4$-tuples of the form $(a, b, 1, x)$ is a normal subgroup of $\mathcal{R}^{(2)}$ and we have
$$  \mathcal{R}^{(2)}/\mathcal{N}^{(2)} = \mathcal{R}^{(0)}.$$
\end{proposition}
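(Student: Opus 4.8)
\noindent\emph{Proof idea.}
The plan is to run, one level higher, exactly the development already carried out for $a\mathcal{R}=\mathcal{R}^{(1)}$. The point to keep in mind is that a $4$-tuple $(a,b,g,f)$ is identified with a genuine invertible lower-triangular matrix --- column $0$ is the expansion of $a$, column $1$ is the expansion of $xb$, and from the $(2,2)$ entry onward the matrix coincides with the Riordan array $(g,f)$ --- so that, once the stated product is recognized as ordinary matrix multiplication, associativity comes for free and only closure, the identity, and the existence of inverses inside $\mathcal{R}^{(2)}$ require checking. To see that the product is matrix multiplication, I would compute the matrix product $MN$ of $M=(a,b,g,f)$ and $N=(h,k,u,v)$ column by column, using that column $j$ of $MN$ is $M$ applied to column $j$ of $N$: column $0$ of $N$ is the expansion of $h$, giving (by Equation~(\ref{Eq})) the slot $(a,b,g,f)\cdot h$; column $1$ of $N$ is the expansion of $xk$, and Equation~(\ref{Eq}) together with $\widetilde{\widetilde{xk}}=\tilde{k}$ yields $M\cdot(xk)=x\bigl((b,g,f)\cdot k\bigr)$ with the inner action taken in $a\mathcal{R}$; and for $j\ge 2$ the columns of $N$ form the Riordan array $(u,v)$ shifted to start at $(2,2)$, on which $M$ acts to produce $(gu(f),v(f))$ shifted to start at $(2,2)$. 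This reproduces the stated product, so $\mathcal{R}^{(2)}$ is closed under an associative operation.

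Next, $I=(1,1,1,x)$ is the infinite identity matrix, hence a two-sided identity. For inverses, given $M=(a,b,g,f)$ set $N=(a^{**},b^{*},\tfrac{1}{g\circ\bar f},\bar f)$ with $a^{**},b^{*}$ as defined; by the block structure of these matrices it is enough to verify that the column-$0$ generating function of both $MN$ and $NM$ is $1$ and the column-$1$ generating function is $x$, the Riordan blocks being inverse to one another by the standard Riordan inverse formula. The identity $(b,g,f)\cdot b^{*}=1$ is literally the computation appearing in the proof of the inverse formula for $a\mathcal{R}$, and $(a,b,g,f)\cdot a^{**}=1$ follows by the same manipulation, now using Equation~(\ref{Eq}), the relation $b^{*}=(1,-g,f)^{-1}\cdot b$, and $\widetilde{\widetilde{1}}=0$; the products $NM$ are handled symmetrically. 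Thus $\mathcal{R}^{(2)}$ is a group.

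For $\mathcal{N}^{(2)}$: closure under products is the identity $(a,b,1,x)\cdot(h,k,1,x)=\bigl((a,b,1,x)\cdot h,\ (b,1,x)\cdot k,\ 1,\ x\bigr)$, both of whose first two slots have constant term $1$; closure under inverses is immediate since the inverse formula sends $(a,b,1,x)$ to $(a^{**},b^{*},1,x)$. For normality, conjugate an arbitrary $(a',b',1,x)$ by $M=(a,b,g,f)$: by the product formula the last two slots of $M\cdot(a',b',1,x)$ are $(g,f)$, and right-multiplying by $M^{-1}=(a^{**},b^{*},\tfrac1{g\circ\bar f},\bar f)$ turns the last two slots into $\bigl(g\cdot\tfrac1{g\circ\bar f}(f),\ \bar f(f)\bigr)=(1,x)$, exactly as in the proof that $\mathcal{N}$ is normal in $a\mathcal{R}$; so the conjugate again has the form $(\,\cdot\,,\,\cdot\,,1,x)$ and lies in $\mathcal{N}^{(2)}$.

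Finally, the canonical factorization $(a,b,g,f)=(a,b,1,x)\cdot(1,1,g,f)$ follows at once from the product formula (via $(a,b,1,x)\cdot 1=a$ and $(b,1,x)\cdot 1=b$), so every coset of $\mathcal{N}^{(2)}$ has a representative of the form $(1,1,g,f)$; moreover, since $(a,b,g,f)\cdot 1=a$ gives $1^{*}=1^{**}=1$ and hence $(1,1,g,f)^{-1}=(1,1,\tfrac1{g\circ\bar f},\bar f)$, one checks that $(1,1,g,f)^{-1}\cdot(a,b,g,f)$ has last two slots $(1,x)$ and so lies in $\mathcal{N}^{(2)}$. Therefore the assignment $(1,1,g,f)\mathcal{N}^{(2)}\mapsto(g,f)$ is a well-defined bijection from $\mathcal{R}^{(2)}/\mathcal{N}^{(2)}$ onto $\mathcal{R}^{(0)}$, and it is a homomorphism because the last two slots of any product $(a,b,g,f)\cdot(h,k,u,v)$ are $(gu(f),v(f))$, i.e.\ precisely $(g,f)\cdot(u,v)$ in $\mathcal{R}$; hence $\mathcal{R}^{(2)}/\mathcal{N}^{(2)}=\mathcal{R}^{(0)}$. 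The only real difficulty is bookkeeping: the quantities $b^{*}$ and $a^{**}$ are themselves defined as lower-level actions, so the verification $(a,b,g,f)\cdot a^{**}=1$ in the inverse step must be unwound with care --- but once the matrix-partition viewpoint is adopted, every identity needed collapses either to a fact already proved for $\mathcal{R}^{(1)}$ or to $\widetilde{\widetilde{1}}=0$.
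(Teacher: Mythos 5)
Your proof is correct, and it follows exactly the route the paper intends: the paper states this proposition without any written proof, implicitly leaving it as an iteration of the Section 2 arguments for $a\mathcal{R}$, and your argument carries out that iteration faithfully (matrix identification of the product for associativity, the $a^{**}$, $b^{*}$ inverse verification, conjugation for normality, and the factorization $(a,b,g,f)=(a,b,1,x)\cdot(1,1,g,f)$ for the quotient). In effect you have supplied the details the paper omits, and the key computations you invoke (e.g.\ $\widetilde{\widetilde{xk}}=\tilde{k}$ and $(a,b,g,f)\cdot a^{**}=1$) do check out.
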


In similar fashion, we may define a hierarchy of sets of $n$-tuples of power series $\mathcal{R}^{(n-2)}$, where $\mathcal{R}^{(0)}=\mathcal{R},$ the Riordan group.

\section{Example 1: Almost-Riordan arrays and a special transformation}
In \cite{Basor}, the authors consider a transformation on sequences $a_n$ with the property $a_{-n}=a_n$, defined by
$$b_n=\sum_{k=0}^{n-1} \binom{n-1}{k}(a_{1-n+2k}+a_{2-n+k}).$$
For the special sequence $a_n=x^n$ for $n \ge 0$, $a_n=x^{-n}$ for $n<0$ (i.e. $a_n=x^{|n|}$), we obtain that the images
$$b_0, b_1, b_2, b_3, b_4,b_5, \ldots$$ are given by
$$0, x + 1, x^2 + 2x + 1, x^3 + 2x^2 + 3x + 2, x^4 + 2x^3 + 4x^2 + 6x + 3, x^5 + 2x^4 + 5x^3 + 8x^2 + 10x + 6, \ldots,$$
with a coefficient array which begins
$$\left(
\begin{array}{ccccccc}
 0 & 0 & 0 & 0 & 0 & 0 & 0 \\
 1 & 1 & 0 & 0 & 0 & 0 & 0 \\
 1 & 2 & 1 & 0 & 0 & 0 & 0 \\
 2 & 3 & 2 & 1 & 0 & 0 & 0 \\
 3 & 6 & 4 & 2 & 1 & 0 & 0 \\
 6 & 10 & 8 & 5 & 2 & 1 & 0 \\
 10 & 20 & 15 & 10 & 6 & 2 & 1 \\
\end{array}
\right).$$

Since the sequences $a_n$ that are of interest in this case all have $a_0=0$, we can equivalently use the array that begins

$$M=\left(
\begin{array}{ccccccc}
 1 & 0 & 0 & 0 & 0 & 0 & 0 \\
 1 & 1 & 0 & 0 & 0 & 0 & 0 \\
 1 & 2 & 1 & 0 & 0 & 0 & 0 \\
 2 & 3 & 2 & 1 & 0 & 0 & 0 \\
 3 & 6 & 4 & 2 & 1 & 0 & 0 \\
 6 & 10 & 8 & 5 & 2 & 1 & 0 \\
 10 & 20 & 15 & 10 & 6 & 2 & 1 \\
\end{array}
\right).$$

This is an almost-Riordan array, defined by
$$\left(\frac{1+2x+\sqrt{1-4x^2}}{2 \sqrt{1-4x^2}}, \frac{(1+2x)c(x^2)}{\sqrt{1-4x^2}}, xc(x^2)\right)=\left(\frac{1}{1+x}, \frac{1-x}{1+x+x^2+x^3}, \frac{x}{1+x^2}\right)^{-1},$$  where
$c(x)=\frac{1-\sqrt{1-4x^2}}{2x}$ is the generating function of the Catalan numbers.
Its first column is given by $\binom{n-1}{\lfloor \frac{n}{2} \rfloor}$. Other than for the first column, this coincides with the Riordan array
$$R=\left(\frac{1+2x}{\sqrt{1-4x^2}}, xc(x^2)\right)=\left(\frac{1-x}{1+x}, \frac{x}{1+x^2}\right)^{-1},$$ which has first column
$$1,2,2,4,6,10,20,\ldots.$$
In fact, we have
$$M = R \cdot \left(
\begin{array}{ccccccc}
 1 & 0 & 0 & 0 & 0 & 0 & 0 \\
 -1 & 1 & 0 & 0 & 0 & 0 & 0 \\
 1 & 0 & 1 & 0 & 0 & 0 & 0 \\
 -1 & 0 & 0 & 1 & 0 & 0 & 0 \\
 1 & 0 & 0 & 0 & 1 & 0 & 0 \\
 -1 & 0 & 0 & 0 & 0 & 1 & 0 \\
1 & 0 & 0 & 0 & 0 & 0 & 1 \\
\end{array}
\right)=R \cdot \left(\frac{1}{1+x}, 1, x\right).$$

The production array of the  almost-Riordan array $M$ begins
$$\left(
\begin{array}{ccccccc}
 1 & 1 & 0 & 0 & 0 & 0 & 0 \\
 0 & 1 & 1 & 0 & 0 & 0 & 0 \\
 1 & 0 & 0 & 1 & 0 & 0 & 0 \\
 -1 & 1 & 1 & 0 & 1 & 0 & 0 \\
1 & -1 & 0 & 1 & 0 & 1 & 0 \\
 -1 & 1 & 0 & 0 & 0 & 1 & 0 \\
 1 & -1 & 0 & 0 & 0 & 0 & 1 \\
\end{array}
\right),$$ where we can see the $\omega-$, $Z-$ and $A-$sequences.

We note that the matrix
$$\left(
\begin{array}{ccccccc}
 1 & 1 & 0 & 0 & 0 & 0 & 0 \\
 1 & 2 & 1 & 0 & 0 & 0 & 0 \\
 2 & 3 & 2 & 1 & 0 & 0 & 0 \\
 3 & 6 & 4 & 2 & 1 & 0 & 0 \\
 6 & 10 & 8 & 5 & 2 & 1 & 0 \\
 10 & 20 & 15 & 10 & 6 & 2 & 1 \\
 20 & 35 & 30 & 21 & 12 & 7 & 2 \\
\end{array}
\right)$$

is equal to
$$\left(
\begin{array}{ccccccc}
 1 & 0 & 0 & 0 & 0 & 0 & 0 \\
 1 & 1 & 0 & 0 & 0 & 0 & 0 \\
 2 & 1 & 1 & 0 & 0 & 0 & 0 \\
 3 & 3 & 1 & 1 & 0 & 0 & 0 \\
 6 & 4 & 4 & 1 & 1 & 0 & 0 \\
 10 & 10 & 5 & 5 & 1 & 1 & 0 \\
 20 & 15 & 15 & 6 & 6 & 1 & 1 \\
\end{array}
\right)\cdot \left(
\begin{array}{ccccccc}
 1 & 1 & 0 & 0 & 0 & 0 & 0 \\
 0 & 1 & 1 & 0 & 0 & 0 & 0 \\
 0 & 0 & 1 & 1 & 0 & 0 & 0 \\
 0 & 0 & 0 & 1 & 1 & 0 & 0 \\
 0 & 0 & 0 & 0 & 1 & 1 & 0 \\
 0 & 0 & 0 & 0 & 0 & 1 & 1 \\
 0 & 0 & 0 & 0 & 0 & 0 & 1 \\
\end{array}
\right),$$ where the first matrix in this product is the Riordan array
$$\left(\frac{1-x}{1+x^2}, \frac{x}{1+x^2}\right)^{-1}=\left(\frac{1+xc(x^2)}{\sqrt{1-4x^2}}, xc(x^2)\right).$$
Thus the product is given by
$$\left(\frac{1+xc(x^2)}{\sqrt{1-4x^2}}, xc(x^2)\right) \cdot (1+x,x)^t.$$

We next multiply the coefficient array by the binomial matrix $B=\left(\binom{n}{k}\right)$ to obtain the almost-Riordan array that begins
$$B\cdot M=\left(
\begin{array}{ccccccc}
 1 & 0 & 0 & 0 & 0 & 0 & 0 \\
 2 & 1 & 0 & 0 & 0 & 0 & 0 \\
 4 & 4 & 1 & 0 & 0 & 0 & 0 \\
 9 & 12 & 5 & 1 & 0 & 0 & 0 \\
 22 & 34 & 18 & 6 & 1 & 0 & 0 \\
 57 & 95 & 58 & 25 & 7 & 1 & 0 \\
 153 & 266 & 178 & 90 & 33 & 8 & 1 \\
\end{array}
\right).$$

This is the almost-Riordan array
$$\left(\frac{1+x+\sqrt{1-2x-3x^2}}{2(1-x)\sqrt{1-2x-3x^2}}, \frac{(1-x)\sqrt{1-2x-3x^2}-(1-2x-3x^2)}{2(1-4x+3x^2)}, \frac{1-x-\sqrt{1-2x-3x^2}}{2x^2}\right),$$ where the last entry is the g.f. of the Motzkin numbers. This almost-Riordan array has an inverse given by
$$\left(\frac{1+2x^2}{1+2x+2x^2+x^3}, \frac{1-x+x^2-x^3}{(1+x)(1+x+x^2)^2}, \frac{x}{1+x+x^2}\right).$$

The production array of the above almost-Riordan array begins
$$\left(
\begin{array}{ccccccc}
 2 & 1 & 0 & 0 & 0 & 0 & 0 \\
 0 & 2 & 1 & 0 & 0 & 0 & 0 \\
 1 & 0 & 1 & 1 & 0 & 0 & 0 \\
 -1 & 1 & 1 & 1 & 1 & 0 & 0 \\
1 & -1 & 0 & 1 & 1 & 1 & 0 \\
 -1 & 1 & 0 & 0 & 0 & 1 & 1 \\
 1 & -1 & 0 & 0 & 0 & 0 & 1 \\
\end{array}
\right),$$ where we can see the usual effect of the binomial transform on the diagonal elements (namely, we increment each diagonal element by $1$).

We finally multiply the almost-Riordan array $M$ by
$$\left(
\begin{array}{ccccccc}
 1 & 0 & 0 & 0 & 0 & 0 & 0 \\
 -1 & 1 & 0 & 0 & 0 & 0 & 0 \\
 -1 & 0 & 1 & 0 & 0 & 0 & 0 \\
 -2 & 0 & 0 & 1 & 0 & 0 & 0 \\
 -3 & 0 & 0 & 0 & 1 & 0 & 0 \\
 -6 & 0 & 0 & 0 & 0 & 1 & 0 \\
 -10 & 0 & 0 & 0 & 0 & 0 & 1 \\
\end{array}
\right)=\left(
\begin{array}{ccccccc}
 1 & 0 & 0 & 0 & 0 & 0 & 0 \\
 1 & 1 & 0 & 0 & 0 & 0 & 0 \\
 1 & 0 & 1 & 0 & 0 & 0 & 0 \\
 2 & 0 & 0 & 1 & 0 & 0 & 0 \\
 3 & 0 & 0 & 0 & 1 & 0 & 0 \\
 6 & 0 & 0 & 0 & 0 & 1 & 0 \\
 10 & 0 & 0 & 0 & 0 & 0 & 1 \\
\end{array}
\right)^{-1}$$ to obtain
$$\left(
\begin{array}{ccccccc}
 1 & 0 & 0 & 0 & 0 & 0 & 0 \\
 -1 & 1 & 0 & 0 & 0 & 0 & 0 \\
 -1 & 0 & 1 & 0 & 0 & 0 & 0 \\
 -2 & 0 & 0 & 1 & 0 & 0 & 0 \\
 -3 & 0 & 0 & 0 & 1 & 0 & 0 \\
 -6 & 0 & 0 & 0 & 0 & 1 & 0 \\
 -10 & 0 & 0 & 0 & 0 & 0 & 1 \\
\end{array}
\right)\cdot \left(
\begin{array}{ccccccc}
 1 & 0 & 0 & 0 & 0 & 0 & 0 \\
 1 & 1 & 0 & 0 & 0 & 0 & 0 \\
 1 & 2 & 1 & 0 & 0 & 0 & 0 \\
 2 & 3 & 2 & 1 & 0 & 0 & 0 \\
 3 & 6 & 4 & 2 & 1 & 0 & 0 \\
 6 & 10 & 8 & 5 & 2 & 1 & 0 \\
 10 & 20 & 15 & 10 & 6 & 2 & 1 \\
\end{array}
\right)$$
$$=\left(
\begin{array}{ccccccc}
 1 & 0 & 0 & 0 & 0 & 0 & 0 \\
 0 & 1 & 0 & 0 & 0 & 0 & 0 \\
 0 & 2 & 1 & 0 & 0 & 0 & 0 \\
 0 & 3 & 2 & 1 & 0 & 0 & 0 \\
 0 & 6 & 4 & 2 & 1 & 0 & 0 \\
 0 & 10 & 8 & 5 & 2 & 1 & 0 \\
0 & 20 & 15 & 10 & 6 & 2 & 1 \\
\end{array}
\right).$$

\section{Example 2: Some Catalan related almost-Riordan arrays and Hankel transforms}

We base this section on the almost-Riordan array given by
$$T = \left(
\begin{array}{ccccccc}
 1 & 0 & 0 & 0 & 0 & 0 & 0 \\
 -1 & 1 & 0 & 0 & 0 & 0 & 0 \\
 1 & 0 & 1 & 0 & 0 & 0 & 0 \\
 -1 & 0 & 0 & 1 & 0 & 0 & 0 \\
 1 & 0 & 0 & 0 & 1 & 0 & 0 \\
 -1 & 0 & 0 & 0 & 0 & 1 & 0 \\
1 & 0 & 0 & 0 & 0 & 0 & 1 \\
\end{array}
\right) \cdot R,$$ where $R$ is the Riordan array
$$R=\left(\frac{1+2x}{\sqrt{1-4x^2}}, xc(x^2)\right)=\left(\frac{1-x}{1+x}, \frac{x}{1+x^2}\right)^{-1}$$ seen in the previous section.
We obtain that $T$ is the almost-Riordan array that begins
$$T=\left(
\begin{array}{ccccccc}
 1 & 0 & 0 & 0 & 0 & 0 & 0 \\
 1 & 1 & 0 & 0 & 0 & 0 & 0 \\
 3 & 2 & 1 & 0 & 0 & 0 & 0 \\
 3 & 3 & 2 & 1 & 0 & 0 & 0 \\
 7 & 6 & 4 & 2 & 1 & 0 & 0 \\
 11 & 10 & 8 & 5 & 2 & 1 & 0 \\
 21 & 20 & 15 & 10 & 6 & 2 & 1 \\
\end{array}
\right),$$
where the first column
$$1,1,3,3,7,11,21,\ldots $$ is given by
$$a_n=(-1)^n+2 \binom{n-1}{\lfloor \frac{n-1}{2} \rfloor},$$ with generating function
$$\frac{1}{1+x}+\frac{1+2x-\sqrt{1-4x^2}}{\sqrt{1-4x^2}}.$$ We note that this sequence has a Hankel transform that begins
$$1, 2, -4, -24, 64, 352, -64, -1664, 256, 7680, -4096,\ldots$$ with a (conjectured) generating function

$$\frac{1+2x-16x^3+48x^4+256x^5+256x^6-128x^7}{(1+4x^2)^2(1-4x^2+16x^4)}.$$
In fact, $T$ is the almost-Riordan array
$$\left(\frac{1}{1+x}+\frac{1+2x-\sqrt{1-4x^2}}{\sqrt{1-4x^2}}, \frac{(1+2x)c(x^2)}{ \sqrt{1-4x^2}},xc(x^2)\right),$$ where the Riordan array
$$\left(\frac{(1+2x)c(x^2)}{ \sqrt{1-4x^2}},xc(x^2)\right)$$ has the inverse
$$\left(\frac{1-x}{(1+x)(1+x^2)}, \frac{x}{1+x^2}\right).$$
We are interested in the effect of the matrix $T$ on the Fibonacci polynomials
$$F_n(y)=\sum_{k=0}^{n-1} \binom{n-k-1}{k}y^k,$$ which have generating function
$$\frac{x}{1-x-yx^2}.$$
We find that the Hankel transform of the image $b_n(y)$ of $F_n(y)$ by $T$ has generating function
$$-\frac{x(1-2(y-2)x+(y-1)^2 x^2)}{1-2(y^2-2y-1)x^2+(y-1)^4x^4}.$$
Regarded as a bi-variate generating function, this generates the array that begins
$$G=\left(
\begin{array}{ccccccccccc}
 0 & 0 & 0 & 0 & 0 & 0 & 0 & 0 & 0 & 0 & 0 \\
 1 & 0 & 0 & 0 & 0 & 0 & 0 & 0 & 0 & 0 & 0 \\
 4 & -2 & 0 & 0 & 0 & 0 & 0 & 0 & 0 & 0 & 0 \\
 -1 & -6 & 3 & 0 & 0 & 0 & 0 & 0 & 0 & 0 & 0 \\
 -8 & -12 & 16 & -4 & 0 & 0 & 0 & 0 & 0 & 0 & 0 \\
 1 & 20 & 10 & -20 & 5 & 0 & 0 & 0 & 0 & 0 & 0 \\
 12 & 74 & -32 & -52 & 36 & -6 & 0 & 0 & 0 & 0 & 0
   \\
 -1 & -42 & -119 & 84 & 49 & -42 & 7 & 0 & 0 & 0 & 0
   \\
 -16 & -216 & -224 & 488 & -80 & -136 & 64 & -8 & 0
   & 0 & 0 \\
 1 & 72 & 468 & 168 & -738 & 216 & 132 & -72 & 9 & 0
   & 0 \\
 20 & 470 & 1536 & -984 & -2008 & 1828 & -160 & -280
   & 100 & -10 & 0 \\
\end{array}
\right).$$
Thus for instance $G$ applied to $2^n$ (i.e. to the vector $<1,2,4,8,\ldots>$), returns the Hankel transform of the image by $T$ of the Jacobsthal numbers, namely the sequence
$$0,-1,0,1,0,-1,0,1,0,-1,0,\ldots.$$
The Hankel transform of the once-shifted sequence $b_{n+1}(y)$ is also of interest. The sequence $b_{n+1}(y)$ is the image of the shifted Fibonacci polynomial $\sum_{k=0}^n \binom{n-k}{k}y^k$ by the Riordan array
$$\left(\frac{(1+2x)c(x^2)}{ \sqrt{1-4x^2}},xc(x^2)\right).$$

We find that the Hankel transform of $b_{n+1}(y)$ is generated by
$$\frac{1+(y-1)x}{1+2x+(y-1)x^2}$$ with coefficient array
$$\left(
\begin{array}{ccccccccccc}
 1 & 0 & 0 & 0 & 0 & 0 & 0 & 0 & 0 & 0 & 0 \\
 -3 & 1 & 0 & 0 & 0 & 0 & 0 & 0 & 0 & 0 & 0 \\
 5 & 0 & -1 & 0 & 0 & 0 & 0 & 0 & 0 & 0 & 0 \\
 -7 & -7 & 7 & -1 & 0 & 0 & 0 & 0 & 0 & 0 & 0 \\
 9 & 24 & -18 & 0 & 1 & 0 & 0 & 0 & 0 & 0 & 0 \\
 -11 & -55 & 22 & 22 & -11 & 1 & 0 & 0 & 0 & 0 & 0
   \\
 13 & 104 & 13 & -104 & 39 & 0 & -1 & 0 & 0 & 0 & 0
   \\
 -15 & -175 & -147 & 285 & -45 & -45 & 15 & -1 & 0 &
   0 & 0 \\
 17 & 272 & 476 & -544 & -170 & 272 & -68 & 0 & 1 &
   0 & 0 \\
 -19 & -399 & -1140 & 684 & 1102 & -874 & 76 & 76 &
   -19 & 1 & 0 \\
 21 & 560 & 2331 & -144 & -3598 & 1680 & 630 & -560
   & 105 & 0 & -1 \\
\end{array}
\right).$$
The Hankel transform of the shifted image of the Fibonacci numbers (case of $y=1$) is then given by $(-2)^n$ while that of the Jacobsthal numbers is $(-1)^n$.

We note an interesting property of the previous coefficient array. If we multiply it (on the left) by the binomial matrix $B$, we obtain the array that begins
$$\left(
\begin{array}{ccccccccccc}
 1 & 0 & 0 & 0 & 0 & 0 & 0 & 0 & 0 & 0 & 0 \\
 -2 & 1 & 0 & 0 & 0 & 0 & 0 & 0 & 0 & 0 & 0 \\
 0 & 2 & -1 & 0 & 0 & 0 & 0 & 0 & 0 & 0 & 0 \\
 0 & -4 & 4 & -1 & 0 & 0 & 0 & 0 & 0 & 0 & 0 \\
 0 & 0 & 4 & -4 & 1 & 0 & 0 & 0 & 0 & 0 & 0 \\
 0 & 0 & -8 & 12 & -6 & 1 & 0 & 0 & 0 & 0 & 0 \\
 0 & 0 & 0 & 8 & -12 & 6 & -1 & 0 & 0 & 0 & 0 \\
 0 & 0 & 0 & -16 & 32 & -24 & 8 & -1 & 0 & 0 & 0 \\
 0 & 0 & 0 & 0 & 16 & -32 & 24 & -8 & 1 & 0 & 0 \\
 0 & 0 & 0 & 0 & -32 & 80 & -80 & 40 & -10 & 1 & 0
   \\
 0 & 0 & 0 & 0 & 0 & 32 & -80 & 80 & -40 & 10 & -1
   \\
\end{array}
\right).$$

We see embedded in this the square $B^2$ of the binomial matrix, which begins
$$\left(
\begin{array}{ccccccc}
 1 & 0 & 0 & 0 & 0 & 0 & 0 \\
 2 & 1 & 0 & 0 & 0 & 0 & 0 \\
 4 & 4 & 1 & 0 & 0 & 0 & 0 \\
 8 & 12 & 6 & 1 & 0 & 0 & 0 \\
 16 & 32 & 24 & 8 & 1 & 0 & 0 \\
 32 & 80 & 80 & 40 & 10 & 1 & 0 \\
 64 & 192 & 240 & 160 & 60 & 12 & 1 \\
\end{array}
\right).$$

Reading the columns of the transformed coefficient array in reverse order (from the bottom up, left to right), we obtain an array associated to the Chebyshev polynomials of the fourth kind (see \seqnum{A228565} and \seqnum{A180870}).

We finish by looking at the almost-Riordan array
$$(2-c(x), c(x), xc(x)) = \left(\frac{1}{1-x}, (1-x), x(1-x)\right)^{-1}, $$ which begins
$$\left(
\begin{array}{ccccccc}
 1 & 0 & 0 & 0 & 0 & 0 & 0 \\
 -1 & 1 & 0 & 0 & 0 & 0 & 0 \\
 -2 & 1 & 1 & 0 & 0 & 0 & 0 \\
 -5 & 2 & 2 & 1 & 0 & 0 & 0 \\
 -14 & 5 & 5 & 3 & 1 & 0 & 0 \\
 -42 & 14 & 14 & 9 & 4 & 1 & 0 \\
 -132 & 42 & 42 & 28 & 14 & 5 & 1 \\
\end{array}
\right).$$
Applying this to the shifted Fibonacci polynomials
$$F_{n+1}(y)=\sum_{k=0}^{\lfloor \frac{n}{2} \rfloor} \binom{n-k}{k}y^k,$$ we get a sequence that begins
$$1, 0, y, 4y, y^2 + 14y, 7y^2 + 48y, y^3 + 35y^2 + 165y, 10y^3 + 154y^2 + 572y, \ldots.$$ Taking the Hankel transform of this sequence, we arrive at a Hankel transform $H_n(y)$ that has the property that
$\frac{H_n(y)}{y^n}$ begins
$$1, 1, -2, 2(y - 1) - y^2, 2y^3 - 7y^2 + y + 3, - 3y^4 + 16y^3 - 4y^2 - 5y + 3, 4y^5 - 29y^4 + 25y^3 + 34y^2 - 5y - 4,\ldots.$$ This sequence has coefficient array that begins
$$\left(
\begin{array}{ccccccc}
 1 & 0 & 0 & 0 & 0 & 0 & 0 \\
 1 & 0 & 0 & 0 & 0 & 0 & 0 \\
 -2 & 0 & 0 & 0 & 0 & 0 & 0 \\
 -2 & 2 & -1 & 0 & 0 & 0 & 0 \\
 3 & 1 & -7 & 2 & 0 & 0 & 0 \\
 3 & -5 & -4 & 16 & -3 & 0 & 0 \\
 -4 & -5 & 34 & 25 & -29 & 4 & 0 \\
\end{array}
\right),$$ and has a (conjectured) generating function of
\begin{scriptsize}
$$\frac{1+(2y+1)x+(y^2+6y+2)x^2+2(2y^2+4y+1)x^3+(5y^2+3y+1)x^4+(y+1)(2y+1)x^5-yx^6-yx^7}{(1+(y+1)x+(2y+2)x^2+(y+1)x^3+x^4)^2}.$$ \end{scriptsize}
\section{Conclusion}
We have shown that the Riordan group of invertible lower-triangular matrices is isomorphic to a subgroup of another group of lower-triangular invertible matrices. These matrics, called ``almost-Riordan'' arrays in this note, appear to be worthy of study in their own right. We have identified one normal subgroup of this new group. We have shown instances where elements of the new group can produce interesting transformations on sequences, and in particular the images of such transformations, for suitable starting sequences, may have significant Hankel determinants.

\bigskip
\hrule
\bigskip
\noindent 2010 {\it Mathematics Subject Classification}: Primary
15B36; Secondary 11B83, 11C20.
\noindent \emph{Keywords:} Riordan group, Riordan array, almost-Riordan group, almost-Riordan array.

\end{document}